\newcommand{\BE}{\mathbf{E}}
\newcommand{\BH}{\mathbf{H}}
\newcommand{\BJ}{\mathbf{J}}
\newcommand{\BV}{\mathbf{V}}
\newcommand{\Bv}{\mathbf{v}}
\newcommand{\Bs}{\mathbf{s}}
\newcommand{\Bn}{\mathbf{n}}
\newcommand{\Bnu}{\boldsymbol{\nu}}
\newcommand{\Bzero}{\boldsymbol{0}}
\newcommand{\Bphi}{\boldsymbol{\phi}}
\newcommand{\Bpsi}{\boldsymbol{\psi}}
\newcommand{\WBE}{\widetilde{\mathbf{E}}}
\newcommand{\WBH}{\widetilde{\mathbf{H}}}
\newcommand{\WE}{\widetilde{E}}
\newcommand{\WH}{\widetilde{H}}
\newcommand{\womega}{\widetilde{\omega}}
\newcommand{\wlambda}{\widetilde{\lambda}}
\newcommand{\bnu}{\bar{\nu}}
\newcommand{\Bbnu}{\bar{\boldsymbol{\nu}}}
\newcommand{\eps}{\epsilon}
\newcommand{\Dx}{\Delta x}
\newcommand{\Dy}{\Delta y}
\newcommand{\Dz}{\Delta z}
\newcommand{\Dt}{\Delta t}
\newcommand{\half}{\frac{1}{2}}
\newtheorem{thm}{Theorem}
\newtheorem{rem}{Remark}
\newcommand\pzc[1]{\textcolor{black}{#1}}
\newcommand{\revtwo}[2]{{\color{black}{#1}}{}}
\newcommand{\revtwomath}[1]{{\color{black}{#1}}}
\begin{document}
%
\title{EM-WaveHoltz: A flexible frequency-domain method built from time-domain solvers}
%
%
%

\author{Zhichao~Peng
        and~Daniel~Appel\"{o}
\thanks{Zhichao Peng is with the Department of Mathematics, Michigan State University, East Lansing, MI 48824.}
\thanks{Daniel~Appel\"{o} is with the 
Department of Computational Mathematics, Science \& Engineering and the 
Department of Mathematics, Michigan State University, East Lansing, MI 48824
}
\thanks{Manuscript received \today; revised \today.}}

%
%

\markboth{ IEEE TRANSACTIONS ON ANTENNAS AND PROPAGATION}%
{Shell \MakeLowercase{\textit{et al.}}: Bare Demo of IEEEtran.cls for IEEE Journals}
%



\maketitle

\begin{abstract}
 A novel approach to computing time-harmonic solutions of Maxwell's equations by time-domain simulations is presented. The method, {EM}-{W}ave{H}oltz, results in a positive definite system of equations which makes it amenable to iterative solution with the conjugate gradient method or with GMRES. Theoretical results guaranteeing the convergence of the method away from resonances are presented. Numerical examples illustrating the properties of {EM}-{W}ave{H}oltz are given.  
\end{abstract}

\begin{IEEEkeywords}
Maxwell equations, iterative method, \pzc{electromagnetic} analysis, frequency-domain analysis, time-domain analysis, FDTD methods, discontinuous Galerkin time-domain (DGTD) methods, positive definite
\end{IEEEkeywords}

%

%
%
%
%
\IEEEPARstart{T}{WO} of the main challenges when solving the time-harmonic Maxwell equations at high frequencies are the indefinite nature of the Maxwell system and the high resolution requirement. Without proper preconditioners, iterative solvers such as GMRES and BICG may converge slowly. These challenges are similar to the ones for solving the Helmholtz equation  at high frequencies. Recently, we introduced a  scalable iterative method called WaveHoltz \cite{appelo2020waveholtz} for the Helmholtz equation. In this paper, we introduce the electromagnetic-WaveHoltz (EM-WaveHoltz) method, which can be seen as a generalization of the WaveHoltz method to the time-harmonic (or frequency-domain) Maxwell equations. The proposed EM-WaveHoltz method converts the frequency-domain problem to a fix point problem in the time-domain. The fixed point iteration is linear and can be rewritten as a linear system of equations with a system matrix that is positive definite and that can therefore be efficiently inverted using standard Krylov methods such as GMRES.   

In the EM-WaveHoltz method, we convert the frequency-domain problem to a time-domain problem by evolving and filtering Maxwell's equations with periodic forcing over one time period. When applied, this filter results in the time-domain solution converging to a fix point where the solution becomes equivalent to the solution of the frequency-domain problem. Salient features of the EM-WaveHoltz method are as follows. 
\begin{enumerate}
\item The resulting linear system is always positive definite (sometimes symmetric). 
\item The EM-WaveHoltz method can be driven by any scalable  time-domain solver, for example the finite difference time-domain (FDTD) method  \cite{Taflove:2005fk} and discontinuous Galerkin time-domain method (DGTD) \cite{HesthavenWarburton02}.  
\item A unique feature of the EM-WaveHoltz method is that it is possible to obtain frequency-domain solutions for multiple frequencies at once but at the cost of a single solve.

\end{enumerate}

We note that properties of our method are to some extent shared with the properties of the controllability method. In particular the controllability method finds the solution to the frequency-domain problem by using time-domain solvers like our approach. However, while our formulation relies on a fixed point iteration, the controllability method seeks to minimize the deviation from time-periodicity of the initial and final data of the time-domain simulation. The controllability method was first proposed for a time-harmonic wave scattering problem  \cite{bristeau1998controllability}, and we refer readers to \cite{grote2019controllability} for recent \pzc{development}. The controllability method is also generalized to the time-harmonic Maxwell equation in second order formulation \cite{bristeau19993d} and the first \pzc{order} formulation \cite{pauly2011theoretical,rabina2014comparison}.  One main difference between our method and the controllability method is that the controllability method needs backward solves, while our method does not.

There are of course many other methods that have been designed for efficiently solving the frequency-domain Maxwell's equations. For scattering and radiation problems in homogenous media integral equation formulations are known to be highly efficient and yield fast algorithms \cite{chew2001fast,andriulli2008multiplicative}. Domain decomposition methods (DDM) \cite{toselli2006domain} have also achieved success for the time-harmonic electromagnetic problems \cite{lee2005non,vouvakis2006fem,peng2010one,peng2010non,dolean2015effective}. The DDM method and the integral equation method have been combined in \cite{peng2011integral}. Recently, \cite{bonazzoli2019domain} extends the ``shifted-Laplacian preconditioner" for the Helmholtz equation to the high frequency time-harmonic Maxwell equations and designs an optimal DDM method. Multigrid methods  have also been considered for the time-harmonic Maxwell equations \cite{hiptmair1998multigrid,gopalakrishnan2004analysis}. A multigrid method for the high frequency time-harmonic Maxwell equations is designed in \cite{lu2016robust}. Sweeping preconditioners for time-harmonic Maxwell equations, which utilize the intrinsic structure of the Green's function, have been developed for  the Yee scheme \cite{tsuji2012sweepingyee} and the finite element method \cite{tsuji2012sweepingfem}. We finally note that it also possible to directly use a time-domain solver in other ways to find the frequency domain solution. The most straightforward approach is to save the solution for some time $T$ and then take a Fourier transform. The upside with this approach is that it produces an approximate result to the frequency domain problem for many frequencies at once. The drawbacks are that the solution is approximate with an accuracy that\revtwo{, in the case of a continuous wave sinusoidal source, scales as}{depends on} $T^{-1}$ and that the need to save the solution makes this approach memory intensive.  \revtwo{This slow convergence can be improved if modulated sources are used and the resulting method can be more efficient than a single frequency solver if the frequency response is desired over a broad spectrum of frequencies and the accuracy requirements are less stringent.}{} For open problems it is possible to appeal to the limiting amplitude principle  \cite{morawetz1962limiting} and simply let a harmonically forced problem converge to the frequency domain problem by simulating long enough. The convergence of this approach is severely impacted when trapping geometry is present and the principle is not valid for closed domains. \revtwo{Also for Fourier transformed methods the required simulation time becomes prohibitive when closed domains with the quality factor $Q=\infty$ are considered.}{}

The rest of  this paper is organized as follows. In Section \ref{sec:continuous_waveholtz}, we present the EM-WaveHoltz formulation for the continuous equations and discuss the properties of the resulting linear system, the choice of the linear solver, and present how to obtain solutions for multiple frequencies in one solve. In Section \ref{sec:discerete_waveholtz}, to show the flexibility with respect to the choice of the time-domain solvers, we couple the EM-WaveHoltz method, first with the Yee scheme and then with the discontinuous Galerkin (DG) method. In Section \ref{sec:numerical}, the performance of the EM-WaveHoltz method is demonstrated through a series of numerical examples. \revtwo{A simple implementation of the method in 1D to aid the reader in understanding the details of the method can be found at \url{https://zhichaopengmath.github.io/code/}.}{}

\section{Electromagnetic WaveHoltz iteration for the Maxwell's equation}\label{sec:continuous_waveholtz}
We consider the frequency-domain Maxwell's equation:
\begin{subequations}
\label{eq:maxwell_frequency}
\begin{align}
&i\omega \eps \BE = \nabla\times \BH-\BJ,\\
&i\omega \mu \BH = -\nabla\times \BE,
\end{align}
\end{subequations}
closed by boundary conditions corresponding to either a perfect electric conductor or to an unbounded domain. Here $\BE$ and $\BH$ are the complex valued electric and magnetic fields, \pzc{$\eps$, $\mu$ are real valued permittivity and permeability} and $J$ is the real valued current source. Taking the real and imaginary parts we find 
\begin{subequations}
\label{eq:maxwell_frequency_re_im}
\begin{align}
-\omega \Im \{ \eps \BE \} &= \Re \{ \nabla\times \BH \} -\BJ, \label{eq:maxwell_frequency_re_im_a}\\
 \omega \Re \{ \eps \BE \} &= \Im \{ \nabla\times \BH \},\label{eq:maxwell_frequency_re_im_b} \\
-\omega \Im \{ \mu \BH\} &= -\Re \{\nabla\times \BE \}, \label{eq:maxwell_frequency_re_im_c} \\
\omega \Re \{ \mu \BH\} &= -\Im \{\nabla\times \BE \}. \label{eq:maxwell_frequency_re_im_d}
\end{align}
\end{subequations}

We want to relate the fields $\BE$ and $\BH$ to  real valued and $T = 2\pi/ \omega$-periodic \revtwo{fields}{solutions} 
\begin{subequations}
\label{eq:periodic_ansatz}
\begin{align}
\WBE &= \hat{{\bf E}}_0 \cos(\omega t) + \hat{{\bf E}}_1 \sin(\omega t), \\
\WBH &= \hat{\bf{H}}_0 \cos (\omega t) + \hat{\bf{H}}_1 \sin (\omega t),
\end{align}
\end{subequations}
\revtwo{that are solutions of the time-domain equations}{to the time-domain equations} 
\begin{subequations}
\label{eq:maxwell_time}
\begin{align}
&\eps \partial_t  \WBE =\nabla\times \WBH- \sin(\omega t)\BJ,\\
&\mu  \partial_t \WBH = -\nabla\times \WBE.
\end{align}
\end{subequations}
For such periodic solutions we can match the $\sin(\omega t)$ and $\cos(\omega t)$ terms to find the relations
\begin{subequations} 
\label{eq:maxwell_time_details}
\begin{align}
\pzc{-\omega ( \eps \hat{{\bf E}}_0 )}  &= \nabla\times  \hat{\bf{H}}_1 - \BJ, \label{eq:apa_a} \\
 \pzc{\omega (\eps\hat{{\bf E}}_1 ) } &=  \nabla\times  \hat{\bf{H}}_0 , \label{eq:apa_b}\\
\pzc{- \omega (\mu \hat{{\bf H}}_0 )}  &= - \nabla\times  \hat{\bf{E}}_1, \label{eq:apa_c} \\
\pzc{ \omega (\mu\hat{{\bf H}}_1 ) } &= -\nabla\times  \hat{\bf{E}}_0. \label{eq:apa_d}
\end{align}
\end{subequations}
\revtwo{Comparing (\ref{eq:maxwell_frequency_re_im_a}) with (\ref{eq:apa_a}) and (\ref{eq:maxwell_frequency_re_im_c}) with (\ref{eq:apa_c}), }{} it now follows that the initial data of $\WBE$ and $\WBH$ \revtwo{matches}{determines} the imaginary part of the frequency-domain solution 
\begin{align*}
\Im \{ \BE \} = \hat{{\bf E}}_0, \ \
 \Im \{ \BH \}= \hat{{\bf H}}_0.  
\end{align*}
\revtwo{Also, from (\ref{eq:maxwell_frequency_re_im_b}), (\ref{eq:apa_b}) and (\ref{eq:maxwell_frequency_re_im_d}), (\ref{eq:apa_d}), we get }{as well as the real part}
\begin{align} \label{eq:get_real}
\Re\{ \BE \}  = \hat{{\bf E}}_1 = \frac{1}{\eps} \nabla\times  \hat{\bf{H}}_0,&&
\Re \{ \BH \} = \hat{{\bf H}}_1 = -\frac{1}{\mu} \nabla\times  \hat{\bf{E}}_0.
\end{align}

Our EM-WaveHoltz method finds the periodic solutions \eqref{eq:periodic_ansatz} by iteratively determining the initial data 
to \eqref{eq:maxwell_time}.

 Define the filtering operator, $\Pi$,  acting on the initial conditions $\Bnu=(\Bnu_E,\Bnu_H)^T$:
\begin{align}
\label{eq:filter}
\Pi\Bnu=
\Pi \left(\begin{matrix}
	    \Bnu_E\\
	    \Bnu_H
	     \end{matrix}\right)
=\frac{2}{T}\int^{T}_{0}\left(\cos(\omega t)-\frac{1}{4}\right)
\left(\begin{matrix}
	    \revtwomath{\WBE_{\Bnu}}\\
	    \revtwomath{\WBH_{\Bnu}}
	\end{matrix}\right)
	dt,
\end{align}
with $T=2\pi / \omega$ and \revtwo{$\WBE_{\Bnu}$ and $\WBH_{\Bnu}$ being the fields resulting from the initial conditions $\Bnu=(\Bnu_E,\Bnu_H)^T$.}{}

\revtwo{By construction $\Pi (\Im\{\BE\},\Im\{\BH\})^T = (\Im\{\BE\},\Im\{\BH\})^T$, and as $(\Re\{\BE\},\Re\{\BH\})^T$ can be computed directly via \eqref{eq:get_real}, the solution to the frequency-domain equation is the fix-point of the operator $\Pi$.}{By construction, one can verify that} \revtwo{}{ $\Pi (\Im\{\BE\},\Im\{\BH\})^T = (\Im\{\BE\},\Im\{\BH\})^T$.} \revtwo{}{As the real part can be computed directly via \eqref{eq:get_real},} \revtwo{}{the solution to the frequency-domain equation is} \revtwo{}{a fix-point of the operator $\Pi$.} 

\revtwo{The operator $\Pi$ is contractive. Precisely, if a certain initial data gives rise to a solution that, in addition to the $\sin(\omega t )$ and $ \cos (\omega t)$ terms in (\ref{eq:periodic_ansatz}), has other time-harmonic components, e.g. $\sin(\omega^\prime t ), \, \omega^\prime \neq \omega$, then in each iteration the filter reduces the amplitude of those components.}{}

Based on these facts, we define the EM-WaveHoltz iteration: 
\begin{align}
\pzc{\Bnu^{n+1}}=\Pi \Bnu^n,\ \ \text{with}\ \ \Bnu^0=(\Bnu^0_E,\Bnu_H^0)^T=\Bzero.
\end{align}
The EM-WaveHoltz iteration converges to the imaginary parts of the solution to the frequency-domain equation 
\begin{align}
\lim_{n\rightarrow\infty}\Bnu^n=\lim_{n\rightarrow\infty}(\Bnu_E^n,\Bnu_H^n)^T=(\Im\{\BE\},\Im\{\BH\})^T,
\end{align}
and the real parts can be recovered via \eqref{eq:get_real}.
\pzc{
\begin{rem}
Alternatively we could formulate the time-domain problem with a cosine forcing
\begin{subequations}
\label{eq:maxwell_time_cos}
\begin{align}
&\eps \partial_t  \WBE =\nabla\times \WBH- \cos(\omega t)\BJ,\\
&\mu  \partial_t \WBH = -\nabla\times \WBE.
\end{align}
\end{subequations}
Again, the real valued $T= 2\pi/ \omega$-periodic solutions to \eqref{eq:maxwell_time_cos} are \revtwo{of}{on} the form \eqref{eq:periodic_ansatz} but (see Appendix \ref{sec:cos_forcing}) the solution to \eqref{eq:maxwell_time_cos} $\WBE$ and $\WBH$ have a slightly different relation to the frequency-domain solution
$$
\Re\{E\}=\hat{E}_0, \;\;\Re\{H\} = \hat{H}_0, \;\;\Im\{E\}=-\hat{E}_1, \;\;\Im\{H\} = -\hat{H}_1. 
$$
With the same filter and iteration process defined as the $\sin$-forcing case, we have
\begin{align}
\lim_{n\rightarrow\infty}\Bnu^n=\lim_{n\rightarrow\infty}(\Bnu_E^n,\Bnu_H^n)^T=(\Re\{\BE\},\Re\{\BH\})^T.
\end{align}
In our numerical tests, we find that the number of iterations needed by the EM-WaveHoltz method are essentially identical for the two alternatives. In this paper, we focus on the EM-Waveholtz with the $\sin$-forcing.
\end{rem}
}
\subsection{EM-WaveHoltz for the energy conserving case}
For \pzc{real-valued $\eps$, $\mu$ and $J$, with} PEC boundary conditions and other boundary conditions \revtwo{that}{} lead to a conservation of the electromagnetic energy in a bounded domain,  the EM-WaveHoltz iteration can be simplified further. For such problems, \revtwo{assuming that $\omega$ is not a resonance frequency of the cavity,
}{} $\Im \{\BH\}$ is identically zero and the EM-WaveHoltz iteration is reduced to 
\begin{align}
\Bnu_E^{n+1} = \Pi \Bnu_E^n, \quad\Bnu_H^n=\Bzero,\quad\Bnu_E^0=\Bzero,
\end{align}
where now
\begin{align}
\Pi\Bnu_E = \frac{2}{T}\int_{0}^T \left(\cos(\omega t)-\frac{1}{4}\right)\revtwomath{\WBE_{\Bnu}} dt.
\end{align}
As long as $\omega$ is not a resonance this simplified EM-WaveHoltz iteration converges  
\begin{align}
\Bnu_E^n=\Im\{\BE\},\;\text{as}\; n\rightarrow\infty.
\end{align}

\subsection{Krylov acceleration}
For unbounded problems where $\omega$ is close to a resonance or for bounded problems with trapping geometries, the convergence of the WaveHoltz fix point iteration can be slow \cite{appelo2020waveholtz}. Fortunately as the iteration is linear, it is easy to rewrite it as a positive definite linear operator that can be efficiently  inverted by a Krylov subspace method. To see this we introduce the operator:
\begin{align}
\label{eq:def_S_operator}
S\Bnu = \Pi \Bnu - \Pi \Bzero.
\end{align}
Then, based on the definition of $S$, we have
\begin{align}
\Pi \Bnu = S\Bnu+\Pi \Bzero.
\end{align}
Hence, finding the fix point of $\Pi$: $\Pi \Bnu =\Bnu$ is equivalent to solving the equation
$(I-S)\Bnu=\Pi \Bzero$. \pzc{Here, we want to emphasize that $\Pi\Bzero\neq \Bzero$ unless frequency-domain problem has zero solutions (see \eqref{eq:pi_zero_result} in Appendix B for more details). Here $\Bzero$ stands for the zero initial condition in the time-domain, and with a non-zero source, the filtered time-domain solution over one  period $\Pi\Bzero$ is very likely nonzero.}

A Krylov method such as the conjugate gradient method, GMRES or TFQMR can be applied to solve $(I-S)\Bnu = \Pi \Bzero$ in a matrix-free manner. In practice, to obtain the right hand side $\Pi \Bzero$, we just need to solve the time-domain problem \eqref{eq:maxwell_time} with zero initial conditions $\Bnu=\Bzero$ from $t=0$ to $t=T$ and use a numerical quadrature to approximate the filter $\Pi \Bzero$ as we march in time. To calculate the matrix multiplication $(I-S)\Bnu$, we can utilize the fact that 
\begin{align*}
(I-S)\Bnu = \Bnu - (\Pi\Bnu-\Pi \Bzero)=\Bnu-\Pi\Bnu+\Pi \Bzero.
\end{align*}
That is, for a given $\Bnu$ and $\Pi\Bzero$  precomputed, we just need to compute $\Pi\Bnu$ to obtain the action of $(I-S)$ onto $\Bnu$. Recall that $\Pi\Bnu$ is obtained by computing the filter by a numerical quadrature incrementally as the solution to \eqref{eq:maxwell_time} is evolved for one $T = 2\pi / \omega$ period with $\Bnu$ as the initial conditions. Thus the cost to compute one Krylov vector is that of a wave solve with one additional variable needed to sum up the projection throughout the evolution. 

When using GMRES there is always a concern about the size of the Krylov subspace as the number of iterations grow. Here our method has a significant upside to solving the frequency-domain problem. Note that although we are looking for a $T = \frac{2\pi}{\omega}$-periodic solution, there is nothing in the method that prevents us from changing the filtering to extend over a longer time, say,  $T=N\frac{2\pi}{\omega}$, with $N$ a positive integer. As we show in the numerical examples below, for moderate $N$ this reduces the number of iterations by a factor of $N$ so that the overall computational cost is the same. \pzc{For GMRES without restart, filtering over longer periods reduces the memory needed. For GMRES with restart, filtering over longer periods reduces the number of restart needed.}

In Appendix \ref{sec:analysis_continuous},  we show that $I-S$ is always a positive definite operator and for energy conserving boundary conditions it is also self-adjoint. These results carry over to the discretized equations in the sense that the matrix that needs to be inverted is always positive definite and, if a symmetric and energy conserving method (like the Yee scheme) is used, the matrix is also symmetric for energy conserving boundary conditions like PEC. For the SPD case our method becomes particularly efficient and memory lean as the conjugate gradient method can be used. 

\revtwomath{Now, we summarize how to implement the EM-WaveHoltz method given a time-domain solver and a GMRES iterative solver. The filtering is presented as Algorithm \ref{alg:pi} and Algorithm \ref{alg:gmres_waveholtz} describes the GMRES/Krylov acceleration. }

\begin{algorithm}[!h]
\caption{\revtwomath{Given initial data $\Bnu$ and a time-domain solver, compute $\Pi\Bnu$.} \label{alg:pi} }
\begin{algorithmic}[1]
\State \revtwomath{Set $\Bnu=(\Bnu_E^T,\Bnu_H^T)$ as the initial condition for the time-domain solver.}
\State \revtwomath{Use the time domain solver to evolve the time-domain equation \eqref{eq:maxwell_time_cos} for one/multiple periods. In each time step, incrementally compute $\Pi_h\Bnu$ by the trapezoid rule for numerical integration.}
\State \revtwomath{After the solution has been evolved for one period in time return $\Pi_h\Bnu$.}
\end{algorithmic}
\end{algorithm}

\begin{algorithm}[!h]
\caption{\revtwomath{GMRES accelerated EM-WaveHoltz iteration.} \label{alg:gmres_waveholtz} }
\begin{algorithmic}[1]
\State{ \revtwomath{Compute $\Pi\mathbf{0}$ by Algorithm \ref{alg:pi}.} }
\Procedure{\revtwomath{MatMul}}{$\Bnu$} \Comment{Compute $(I-S)\Bnu$.}
	\State \revtwomath{Use Algorithm \ref{alg:pi} to compute $\Pi\Bnu$.}
	\State \revtwomath{\textbf{return} $(I-S)\Bnu=\Bnu-\Pi\Bnu+\Pi\mathbf{0}$.}
\EndProcedure
\Procedure{\revtwomath{Solve}}{{\tt TOL}} \Comment{Solve $(I-S)\Bnu = \Pi\mathbf{0}$}
	  \State \revtwomath{Set the initial guess as $\Bnu^0=(\mathbf{0},\mathbf{0})^T$.}
	  \State \revtwomath{Apply GMRES with matrix free $\textsc{MatMul}$ procedure. Stop if the relative residual is smaller than {\tt TOL}}.
\EndProcedure
\State \revtwomath{The solution produced by $\textsc{Solve}$ is the imaginary part of the frequency-domain solution.}
\end{algorithmic}
\end{algorithm}


\subsection{Multiple frequencies in one solve}\label{sec:multiple_frequency}
Similar to the WaveHoltz method for the Helmholtz  equation \cite{appelo2020waveholtz}, the EM-WaveHoltz method can be applied to obtain the solutions for multiple frequencies in one solve. 

Precisely, let $\omega_k=n_k\omega_0$, $k=1,\dots,N$ for some $\omega_0>0$ and $n_1<n_2<\dots<n_N$ being positive integers. Then in a traditional frequency-domain solver each frequency requires the solution of $N$ different systems
\begin{subequations}
\label{eq:maxwell_multiple_frequency}
\begin{align}
&i\omega_k \eps \BE_k = \nabla\times \BH_k-\BJ_k,\\
&i\omega_k \mu \BH_k = - \nabla\times \BE_k.
\end{align}
\end{subequations}

Now, assuming that each frequency solve has the same type of boundary condition and material properties  (the forcing $\BJ_k$ can be different for each $k$), we can solve for all frequencies at once. We take the energy conserving case as an example, then the {\bf single} time-domain problem we must solve is
\begin{subequations}
\label{eq:maxwell_multiple_frequency_time}
\begin{align}
&\eps \partial_t \WBE = \nabla\times\WBH- \sum_{k=1}^{N}\sin(\omega_k t)\BJ_k,\\
&\mu \partial_t \WBH = \nabla\times\WBE.
\end{align}
\end{subequations}
The converged solution to \eqref{eq:maxwell_multiple_frequency_time} can be decomposed as
\begin{align} \label{eq:all_at_once_sum}
\WBE = \sum_{k=1}^N \hat{\BE}_{k,0}\cos(\omega_k t),
\end{align}
where $\hat{\BE}_{k,0} = \Im \{\BE_k\}$ gives the solution to the original frequency-domain problem \eqref{eq:maxwell_multiple_frequency} corresponding to $\omega_k$. To obtain the ``all $k$'' solution  through EM-WaveHoltz is easy, the filtering operator simply needs to be modified as 
\begin{align}
\Pi\Bnu_E = \frac{2}{T}\int_{0}^T \left(\sum_{k=1}^{N}\cos(\omega_k t)-\frac{1}{4}\right)\revtwomath{\WBE_{\Bnu}} dt.
\end{align}
Here, the final time $T$ is chosen such that $T/(\frac{2\pi}{\omega_k})$ is an integer for all $k$. 

Once the EM-WaveHoltz iteration has converged to \eqref{eq:all_at_once_sum} we separate the different solutions by evolving \eqref{eq:maxwell_multiple_frequency} for one more $T_k$-period while applying the filters  
\begin{align}\label{eq:multiple_freqeuncy_filter}
\Im(\BE_k) &= \frac{2}{T_k} \int_{0}^{T_k} \left(\cos(\omega_k t)-\frac{1}{4}\right)\WBE dt, \\
\Re(\BE_k) &= \frac{2}{T_k} \int_{0}^{T_k} \sin(\omega_k t) \, \WBE dt.
\end{align}

\section{Discretization of the  EM-WaveHoltz method}\label{sec:discerete_waveholtz}
We have presented how the EM-WaveHoltz iteration converts a frequency-domain problem to a time-domain problem. In this section, we will use the Yee scheme \cite{yee1966numerical,taflove2005computational} and the discontinuous Galerkin (DG) method \cite{HesthavenWarburton02,hesthaven2007nodal,cockburn2012discontinuous} as examples of integrating the EM-WaveHoltz iteration in existing time-domain solvers. We also want to point out that it is possible to couple the EM-WaveHoltz method to other type time-domain solvers such as spectral element method and continuous finite element method. Further, although we don't consider it here, our approach directly generalizes to linear dispersive frequency-domain models such as the generalized dispersive materials modeled through an auxiliary differential equation approach in \cite{banks2020high}.  

\subsection{Yee-EM-WaveHoltz}
The Yee scheme \cite{yee1966numerical,taflove2005computational} or the finite-difference-time-domain (FDTD) method, is one of the most popular and successful methods in computational electromagnetics and can be easily turned into a fast FDFD method, the Yee-EM-WaveHoltz method, as follows.

\pzc{For brevity we consider the two dimensional TM model, then  $E_x=E_y=H_z=0$. Assume a uniform time step size $\Dt = T / M$ and denote a grid function at a point $(i\Dx,j\Dy,n\Dt)$ by $\revtwomath{F^n_{i,j}}$ and denote $t^n= n\Dt$.  
Then the Yee scheme to solve the time-domain problem in the EM-WaveHoltz formulation is: 
\begin{subequations}
\label{eq:yee_2d_tm}
\begin{align}
&\eps_{i,j}\frac{(\WE_z)^{n+1}_{i,j}-(\WE_z)^{n}_{i,j}}{\Dt}=\Big(\frac{(\WH_y)^{n+\half}_{i+\half,j}-(\WH_y)^{n+\half}_{i-\half,j}}{\Dx}\notag\\
&-\frac{(\WH_x)^{n+\half}_{i,j+\half}-(\WH_x)^{n+\half}_{i,j-\half}}{\Dy}\Big)-\sin(\omega t^{n+\frac{1}{2}}) (J_z)_{i,j}\label{eq:yee_2d_tm_E}\\
&\frac{(\WH_x)^{n+\half}_{i,j+\half}-(\WH_x)^{n-\half}_{i,j+\half}}{\Dt}
=-\frac{1}{\mu_{i,j+\half}}\frac{(\WE_z)^n_{i,j+1}-(\WE_z)^n_{i,j}}{\Dy},\label{eq:yee_2d_tm_Hx}\\
&\frac{(\WH_y)^{n+\half}_{i+\half,j}-(\WH_y)^{n-\half}_{i+\half,j}}{\Dt}
=\frac{1}{\mu_{i+\half,j}}\frac{(\WE_z)^{n}_{i+1,j}-(\WE_z)^n_{i,j}}{\Dx},\label{eq:yee_2d_tm_Hy}
\end{align}
\end{subequations}
}
For the initial step $\WH_x^{-\frac{1}{2}}$ is initialized as
\begin{align}
\label{eq:H_negative_half}
&\revtwomath{(\WH_x)^{-\half}_{i,j+\half}}=(\WH_x)^{0}_{i,j+\half}-\frac{\Dt}{2\revtwomath{\mu_{i,j+\half}}}\Big(-\frac{\revtwomath{(\WE_z)^0_{i,j+1}-(\WE_z)^0_{i,j}}}{\Dy}
\Big),
\end{align}
and $\WH^{-\half}_y$ \pzc{is} initialized similarly.

To approximate the filter operator $\Pi$ in \revtwo{\eqref{eq:filter}}{\eqref{eq:filterYEE}} we use the composite trapezoidal rule 
\begin{align}
\label{eq:filterYEE}
\Pi_h \left(\begin{matrix}
	    \Bnu_E\\
	    \Bnu_H
	     \end{matrix}\right)
=\frac{2\Dt}{T}\sum_{n=0}^M\eta_n\left(\cos(\omega t^n)-\frac{1}{4}\right)
\left(\begin{matrix}
	    \revtwomath{\WBE_{\Bnu}}^n\\
	  \revtwomath{\frac{\WBH_{\Bnu}^{n+\half}+\WBH_{\Bnu}^{n-\half}}{2}}
	\end{matrix}\right),
\end{align}
where 
\begin{align}
\eta_n = \begin{cases}
\frac{1}{2}, \; n=0\;\text{or}\;M,\\
1,\;\text{otherwise}.
\end{cases}
\end{align}

\pzc{Due to the second order time discretization, the solution obtained by the above iteration introduces an additional $O(\Dt^2)$ error from time marching.} 
Of course since $\Dt \sim \min \{\Delta x, \Delta y\}$ the EM-WaveHoltz solution is converging at the same rate as the spatial discretization but nevertheless it does have an additional error. This error is easily eliminated by a small modification which we discuss next.  \pzc{We only present the 2D TM model here but note that EM-WaveHoltz can be straightforwardly generalized to the full 3D model.}


\subsection{Eliminating the temporal error in EM-WaveHoltz}
For brevity we consider the energy conserving two dimensional TM model. 
\pzc{To eliminate the time-marching error, we first slightly modify the source term in the time-domain. We replace $\sin(\omega t^{n+\half})$ in \eqref{eq:yee_2d_tm_E} with }
\begin{align}
\label{eq:modified_sin}
S^\half = \frac{\omega\Dt}{2},\; S^{n+\half}=S^{n-\half} + \Dt\omega\cos(\omega t^n).
\end{align}
\pzc{Here, $S^{n+\half}$ is a second order approximation to $\sin(\omega t^{n+\half})$.} Using $S^{n+\half}$ instead of $\sin(\omega t^{n+\half})$ gives us a chance to eliminate the error due to the time discretization. 

Eliminating $H_x$ and $H_y$ in \eqref{eq:yee_2d_tm}, we have 
\begin{align}
\label{eq:yee_2d_tm_2nd}
&\frac{(\WE_z)_{i,j}^{n+1}-2(\WE_z)^n_{i,j}+(\WE_z)_{i,j}^{n-1} }{\Dt^2}+ L_h (\WE_z)^n_{i,j} \notag\\
=&- \frac{1}{\eps_{i,j}}(J_z)_{i,j}\frac{S^{n+\half}-S^{n-\half}}{\Dt}\notag\\
=& -\frac{1}{\eps_{i,j}}(J_z)_{i,j}\pzc{\omega}\cos(\omega t^n),
\end{align}
where 
\begin{align}
-L_h F_{i,j}&=\frac{1}{\eps_{i,j}\Dx}\Big(\frac{F_{i+1,j}-F_{i,j}}{\mu_{i+\half,j}\Dx}
		-\frac{F_{i,j}-F_{i-1,j}}{\mu_{i-\half,j}\Dx}\Big)\notag\\
		&+\frac{1}{\eps_{i,j}\Dy}\Big(\frac{F_{i,j+1}-F_{i,j}}{\mu_{i,j+\half}\Dy}
		-\frac{F_{i,j}-F_{i,j-1}}{\mu_{i,j-\half}\Dy}\Big).
\end{align}
\eqref{eq:yee_2d_tm_2nd} is an approximation to the second order form of the time-domain equation in EM-WaveHoltz. We now have the following theorem guaranteeing the convergence of the discrete iteration (for the energy conserving case)

\begin{thm}\label{thm:2d_tm}
Let $\nu^{\infty}$ be the solution to 
\begin{align}
\womega^2 \nu^\infty-L_h\nu^\infty = \omega\left(\frac{1}{\eps}J\right),
\end{align}
where 
\begin{align}
\pzc{\womega = \frac{\sin(\omega\Dt/2)}{\Dt/2}=\omega+O(\Dt^2).}
\end{align}
Further, let $\{-\lambda_j^2\}_{j=1}^N$ and $\{\psi_j\}_{j=1}^N$ be the eigenvalues and corresponding eigenfunctions of $L_h$, and $0<\lambda_1<\lambda_2<\dots<\lambda_N$.
Assume that $\omega$ is not a resonance and denote the relative distance to the closest resonance 
\begin{align}
\delta_h=\min_j|\lambda_j-\omega|/\omega>0.  
\end{align}
Then, for the energy conserving method \eqref{eq:yee_2d_tm}, with the filter \eqref{eq:filterYEE}, the Yee-EM-WaveHoltz iteration  $\nu^{(k+1)}=\Pi_h\nu^{(k)}$ with $\nu^{(0)}=0$ converges to $\nu^{\infty}$ as long as 
\begin{align}
\Dt\leq \frac{2}{\lambda_N+2\omega/\pi},\; \omega\Dt\leq \min(\delta_h,1).
\end{align}

Moreover, the convergence rate is at least $\rho_h=\max(1-0.3\delta_h^2,0.6)$.
\end{thm}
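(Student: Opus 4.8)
The plan is to diagonalize the iteration in the eigenbasis of $L_h$. In the energy-conserving case the Yee discretization makes $L_h$ self-adjoint and positive definite with respect to the $\eps$-weighted discrete inner product, so its eigenfunctions $\{\psi_j\}$ form an orthogonal basis; writing $L_h\psi_j=\lambda_j^2\psi_j$ (the $\lambda_j$ are the squared discrete frequencies, so the resonances sit at $\lambda_j=\omega$), both the leapfrog recursion \eqref{eq:yee_2d_tm_2nd} and the filter \eqref{eq:filterYEE} decouple into independent scalar problems, and it suffices to analyze one mode. Expanding $\nu=\sum_j c_j\psi_j$ and $\frac{1}{\eps}J=\sum_j\hat f_j\psi_j$, and using that the exact discrete forcing is $\frac{S^{n+\half}-S^{n-\half}}{\Dt}=\omega\cos(\omega t^n)$ by \eqref{eq:modified_sin}, the restriction of \eqref{eq:yee_2d_tm_2nd} to $\psi_j$ is the scalar constant-coefficient recursion $\frac{u^{n+1}-2u^n+u^{n-1}}{\Dt^2}+\lambda_j^2 u^n=-\omega\hat f_j\cos(\omega t^n)$, started from $u^0=c_j$ with zero discrete initial velocity, the latter enforced by the symmetric half-step initialization \eqref{eq:H_negative_half} so that no sine mode is excited.

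I would then solve this recursion exactly. Substituting the discrete dispersion relation, the homogeneous solutions oscillate at the modified frequency $\wlambda_j$ with $\sin(\wlambda_j\Dt/2)/(\Dt/2)=\lambda_j$, while the cosine forcing admits the particular solution $A_j\cos(\omega t^n)$ with $A_j=\omega\hat f_j/(\womega^2-\lambda_j^2)$, the factor $\womega=\sin(\omega\Dt/2)/(\Dt/2)$ arising from the same relation. Matching the data gives $u^n=(c_j-A_j)\cos(\wlambda_j t^n)+A_j\cos(\omega t^n)$, so that applying the linear filter $\Pi_h$ turns the mode-$j$ iteration into the scalar affine map $c_j\mapsto\beta_h(\wlambda_j)\,c_j+A_j\big(\beta_h(\omega)-\beta_h(\wlambda_j)\big)$, where $\beta_h(\sigma)=\frac{2\Dt}{T}\sum_{n=0}^M\eta_n\big(\cos(\omega t^n)-\frac14\big)\cos(\sigma t^n)$ is the discrete filter transfer function. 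Because $T$ spans an integer number of periods and $M\ge3$, the periodic trapezoidal rule integrates $\cos(\omega t)$ and $\cos^2(\omega t)$ exactly, giving $\beta_h(\omega)=1$; the fixed point of the affine map is then $c_j^\ast=A_j$, which by the definition of $A_j$ and $\womega$ is precisely the $\psi_j$-component of $\nu^\infty$. Hence, once every multiplier satisfies $|\beta_h(\wlambda_j)|<1$, the linear iteration converges to $\nu^\infty$, with asymptotic rate $\sup_j|\beta_h(\wlambda_j)|$.

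The heart of the proof, and the step I expect to be hardest, is the quantitative estimate $\sup_j|\beta_h(\wlambda_j)|\le\rho_h=\max(1-0.3\,\delta_h^2,\,0.6)$, which yields both convergence and the stated rate. I would first evaluate the finite sum $\beta_h(\sigma)$ in closed form via product-to-sum identities and geometric (Dirichlet-kernel) summation, obtaining a discrete analogue of the continuous transfer function $\beta(\sigma)=\frac{2}{T}\int_0^T(\cos(\omega t)-\frac14)\cos(\sigma t)\,dt$ analyzed for WaveHoltz in \cite{appelo2020waveholtz}. The CFL bound $\Dt\le 2/(\lambda_N+2\omega/\pi)$ keeps $\lambda_j\Dt/2$ below $\pi/2$ with a margin, so every $\wlambda_j$ is real and, since $\omega$ is not a resonance, distinct from $\womega$; the bound $\omega\Dt\le\min(\delta_h,1)$ both controls the perturbation $|\womega-\omega|=O(\Dt^2)$ and keeps the finite sum close enough to the integral that the constants of the continuous envelope survive discretization. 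The estimate then splits into two regimes: near resonance, $\beta_h$ has a quadratic maximum at $\sigma=\omega$ (where $\beta_h=1$ and $\beta_h'=0$), and bounding it by its quadratic envelope gives $\beta_h(\wlambda_j)\le 1-0.3\,\delta_h^2$; away from resonance the $-\frac14$ shift — exactly the term that keeps $\beta_h$ bounded away from $-1$ — supplies the uniform floor $|\beta_h|\le 0.6$. Combining the two over all $j$ produces $\rho_h$ and completes the argument, the genuine difficulty being the faithful transfer of the continuous envelope of \cite{appelo2020waveholtz} to the finite-sum setting while tracking the $O(\Dt)$ discrete corrections.
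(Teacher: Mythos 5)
Your proposal is correct and follows essentially the same route as the paper's proof: expansion in the eigenbasis of $L_h$, exact solution of the per-mode leapfrog recursion in the form $(\nu_j-\nu_j^\infty)\cos(\wlambda_j t^n)+\nu_j^\infty\cos(\omega t^n)$ with $\sin(\wlambda_j\Dt/2)/(\Dt/2)=\lambda_j$, reduction of the filtered iteration to the scalar multiplier $\beta_h(\wlambda_j)$ whose fixed point is $\nu_j^\infty$ by exactness of the periodic trapezoidal rule on trigonometric polynomials, and the bound $|\beta_h(\wlambda_j)|\leq\rho_h$. The only real difference is at the final quantitative step: the paper simply invokes Lemma 2.5 of \cite{appelo2020waveholtz} for $|\beta_h(\wlambda_j)|\leq\max(1-0.3\delta_h^2,0.63)$ under the stated time-step restrictions, whereas you propose to re-derive that envelope estimate from a closed-form evaluation of the finite sum --- a legitimate but substantially more laborious path whose details you only sketch.
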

The proof of this Theorem is presented in Appendix \ref{sec:analysis_discrete}. 
We note that the first constraint on the timestep is essentially the standard CFL condition for an explicit method while the second condition could be very strict. 
\pzc{In fact, for all our numerical experiments, we only choose the $\Dt$ based on the CFL condition, and the violation of the second condition does not lead to problems. Hence, we conjecture that the second condition is not a practical limitation.}

Now, if we replace $\cos(\omega t^n)$  with $\cos(\bar{\omega}t^n)$ in \eqref{eq:modified_sin} with 
\begin{align}
\label{eq:omega_eliminate_time}
\bar{\omega} = \frac{2}{\Dt} \sin^{-1} \left( \frac{ \omega\Dt}{2} \right),
\end{align}
and modify the trapezoidal weights in the filter as
\begin{align}
\label{eq:modified_quadrature}
\widehat{\Pi}_h\nu = \frac{2\Dt}{T}\sum_{n=0}^M\left(\cos(\omega t^n)-\frac{1}{4}\right)\frac{\cos(\omega t^n)}{\cos(\bar{\omega} t^n)}\WE_z^n.
\end{align}
Then Theorem  \ref{thm:2d_tm} holds but the convergence is  to $\nu^{\infty}$ being the solution to the standard discretized frequency-domain problem
\begin{align}
\omega^2 \nu^\infty-L_h\nu^\infty = \omega\left(\frac{1}{\eps}J\right).
\end{align}
The derivation of this strategy is discussed in Appendix \ref{appendix:time_error} along with the proof of Theorem  \ref{thm:2d_tm}.
\pzc{An alternative strategy to eliminate the temporal error is suggested in \cite{garcia2021numerical}.}
\subsection{DG-EM-WaveHoltz}
The discontinuous Galerkin (DG) method, due to its high order accuracy, flexibility to use nonconforming meshes and its suitability for parallel implementation, has become increasingly popular for the simulation of time-domain wave propagation. As for the Yee scheme, DGTD can easily be turned into a frequency-domain solver using our approach. Here we use the time-domain DG method of \cite{HesthavenWarburton02,hesthaven2007nodal}. 

Consider Maxwell's equation in $d$-dimensions. Let $\Omega_j$ be an element, and $P^s(\Omega_j)$ be the space of polynomials at most degree $s$. Define $V_h^s(\Omega_j) = \big(P^s(\Omega_j)\big)^d$ to be the corresponding vector polynomial space. The DG method seeks the solution
$\WBE_h\in V_h^s(\Omega_j)$,  $\WBH_h\in V_h^s(\Omega_j)$ such that for any $\Bphi\in V_h^s(\Omega_j)$, $\Bpsi \in V_h^s(\Omega_j)$
\begin{subequations}
\label{eq:dg}
\begin{align}
&\int_{\Omega_j} \partial_t\WBE_h\cdot \Bphi d\BV=\int_{\Omega_j}\WBH_h\cdot \nabla\times(\frac{1}{\eps}\Bphi)d\BV\notag\\
&+\int_{\partial\Omega_j}(\widehat{\BH}\times \Bn)\cdot(\frac{1}{\eps}\Bphi) d\Bs
-\int_{\Omega_j}\sin(\omega t)\BJ\cdot\Bphi d\BV,
\end{align}
\begin{align}
&\int_{\Omega_j} \partial_t\WBH_h\cdot \Bpsi d\BV=-\int_{\Omega_j}\WBE_h\cdot \nabla\times(\frac{1}{\mu}\Bpsi)d\BV\notag\\
&-\int_{\partial\Omega_j}(\widehat{\BE}\times \Bn)\cdot(\frac{1}{\mu}\Bpsi )d\Bs.
\end{align}
\end{subequations}
Here, $\Bn$ is the outward pointing normal of a face and  $\widehat{\BH}$ and $\widehat{\BE}$ are numerical fluxes. A stable and accurate choice for the numerical fluxes is
\begin{align}
\widehat{\BH}=\{\WBH\}+\alpha[\WBE],\; \widehat{\BE}=\{\WBE\}+\beta[\WBH].
\end{align}
Here $\Bv^{\pm}$ denotes the two values on each side of a face, $\{\Bv\}=\frac{1}{2}(\Bv^{+}+\Bv^-)$ is the average and  $[\Bv]=\Bn^+\times \Bv^++\Bn^-\times \Bv^-$ is the jump. The semi-discretization \eqref{eq:dg} can be evolved in a method of lines fashion, using for example a Runge-Kutta or Taylor method as the time stepper. Depending on the time discretization it may be possible to eliminate the time error as discussed above but we don't pursue this here. Further, in the examples below we always use the trapezoidal rule to discretize the filter. 

\section{Numerical results}\label{sec:numerical}
In this section we demonstrate the performance of the EM-WaveHoltz methods on several examples in two and three dimensions. The $\sin$-forcing formulation is used in two dimensions and the $\cos$-forcing formulation is used in three dimensions, unless otherwise specified. \pzc{For all numerical examples, the time step size $\Dt$ is chosen based on the CFL conditions of the time-domain methods. Again we note that such timesteps violates the second condition in Theorem 1 but that this condition appears to be a technicality as none of the examples below are affected by this.} \revtwo{In this section we always use the Krylov accelerated version of the iteration. }{}

\subsection{\pzc{Comparison with the MEEP FDFD solver}}
\pzc{
We compare our Yee-EM-WaveHoltz code with the iterative FDFD solver of the open source C++ package MEEP \cite{oskooi2010meep}. Our code is implemented by combining EM-WaveHoltz with the FDTD code of the C library rbcpack \cite{rbc_pack_url}. Our code uses a self-implemented GMRES solver without restart. The FDFD solver of MEEP uses BICG-Stab($l$) method \cite{sleijpen1993bicgstab}. Both codes are executed in a serial-manner on a 2015 MacBook with 2.2 GHz Quad-Core Intel Core i7 cpu.}

 \begin{figure}[]
\centering
\includegraphics[width=0.24\textwidth,trim={2.2cm 0.0cm 2.6cm 0.8cm},clip]{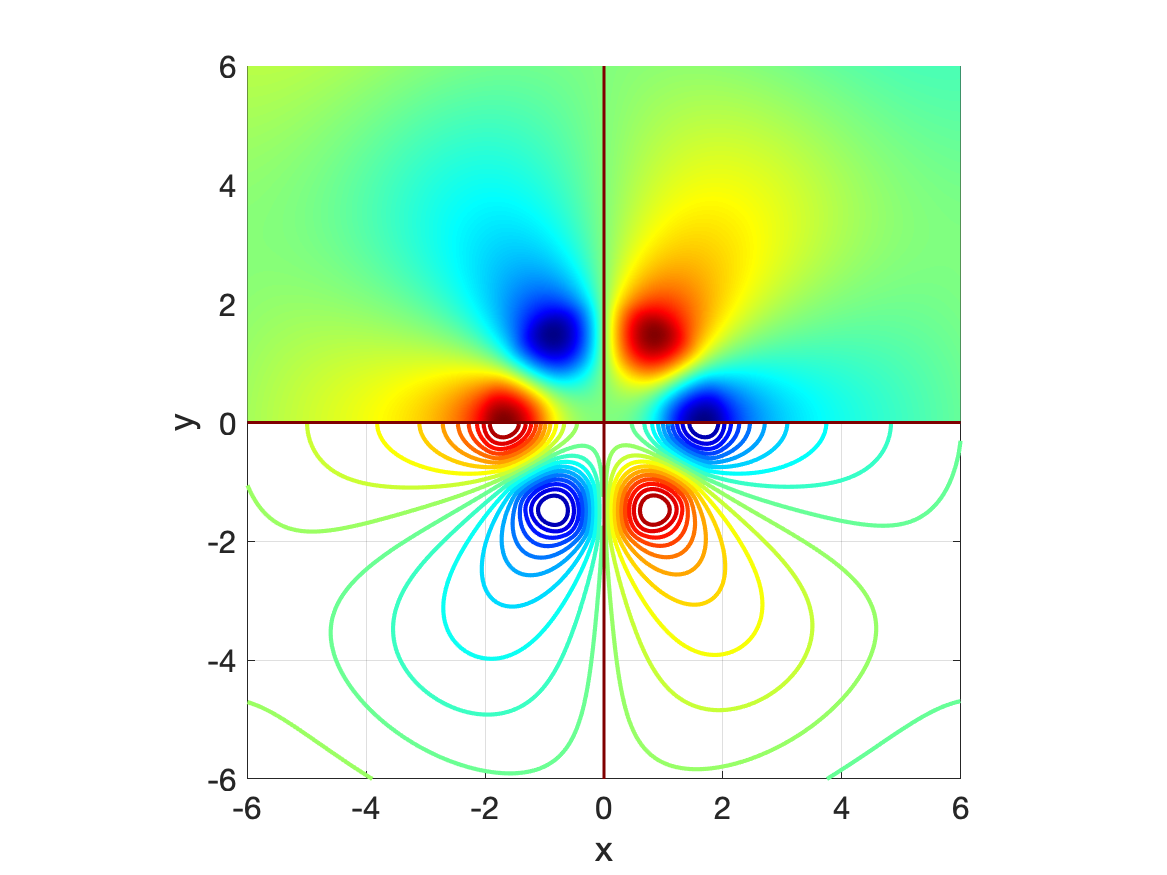}
\includegraphics[width=0.24\textwidth,trim={2.2cm 0.0cm 2.6cm 0.8cm},clip]{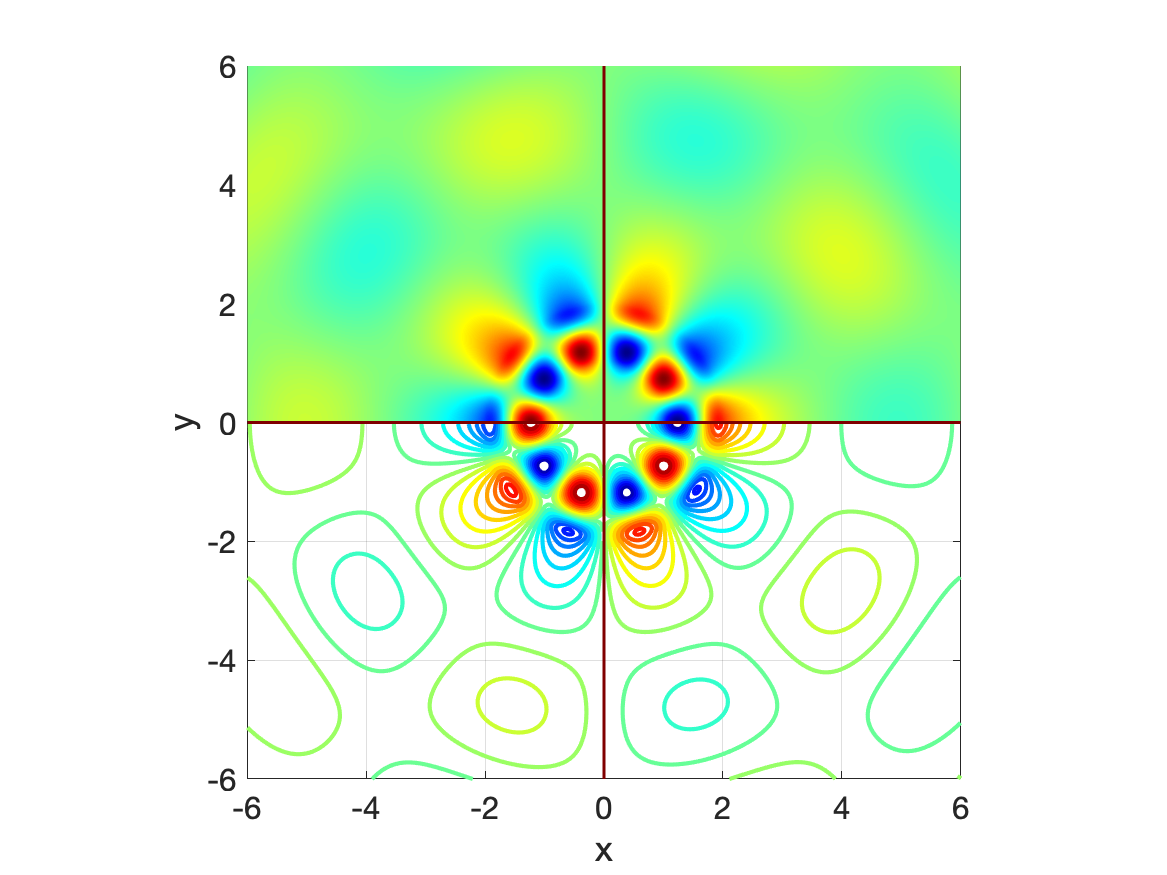}
\includegraphics[width=0.24\textwidth,trim={2.2cm 0.0cm 2.6cm 0.8cm},clip]{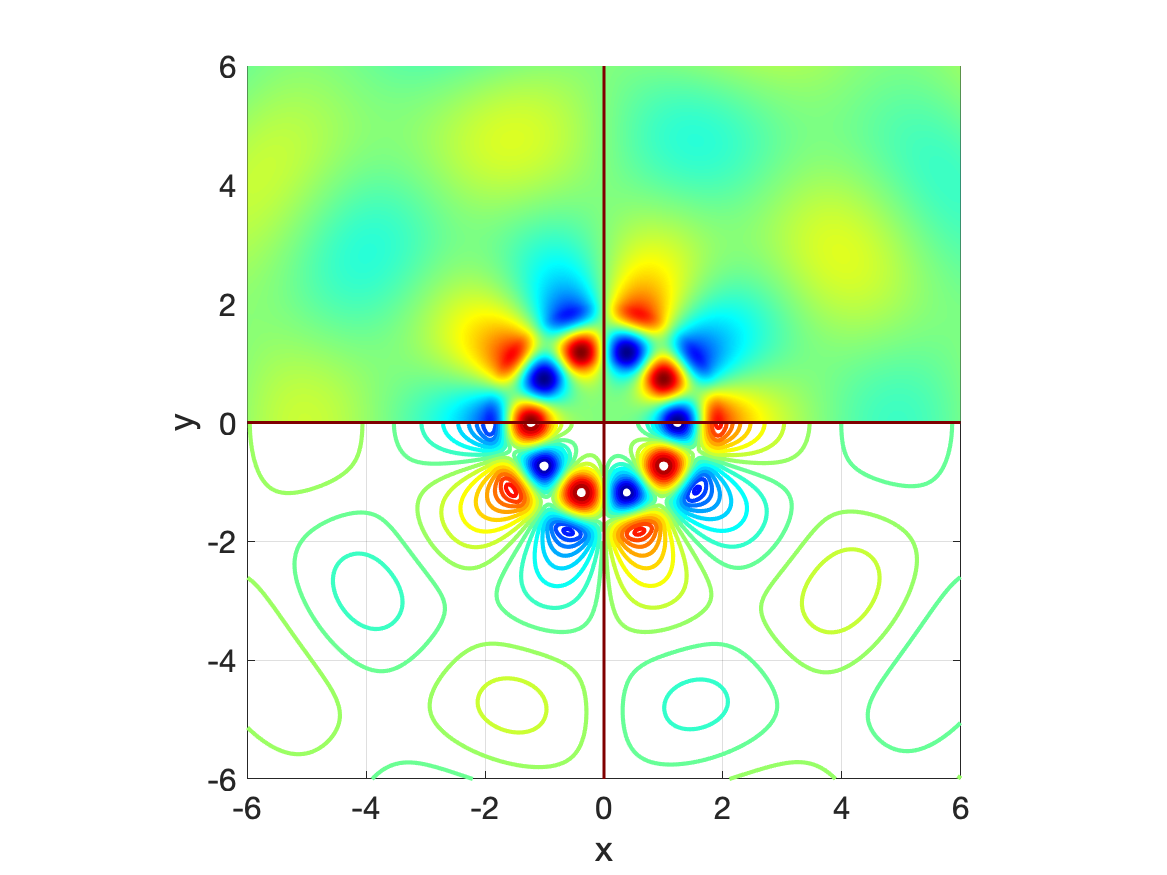}
\caption{The real part of the $E_z$ field (normalized). Top left figure, $\omega=\omega_0$, top right figure $\omega=2.24\omega_0$, bottom figure: $\omega=2.7\omega_0.$ Left: Yee-EM-WaveHoltz. Right: MEEP's FDFD solver. \label{fig:waveholtz_vs_meep_solution}. }
\end{figure}

\pzc{Following MEEP package's benchmark example for the FDFD code (see \cite{meep_url}), we consider a ring resonator and the 2D TM model. The computational domain is $[-6,6]^2$
 with nonreflecting boundary conditions. A ring resonator with $\eps_r=3.4^2$ is located at $\{(x,y):1\leq\sqrt{x^2+y^2}\leq2\}$. The permittivity outside the ring is $\eps=1$, and the permeability $\mu=1$ in the whole computational domain. Two point sources are placed at $(1.1,0)$ with magnitude $1$ and $(-1.1,0)$ with magnitude $-1$. Let $\omega_0=0.118\times 2\pi$. We consider $\omega=\omega_0,2.24\omega_0$ and $2.7\omega_0$. We use $N=120,240$ and $480$ grids in each direction.}
 
 \begin{table}[]
\caption{Computational time (sec)}
\label{tab:wh_vs_meep_time}
\centering
\begin{tabular}{|c|c|c|c|c|c|c|}
\hline
                         & $N$ & $\omega = \omega_0$ & $\omega = 2.24\omega_0$  & $\omega = 2.7\omega_0$\\
\hline 
\multirow{3}{*}{EM-WH}& $120$& $20$ & $12$ & $12$ \\
                             & $240$& $94$ & $55$ &$60$ \\
                             & $480$& $562$ & $326$ & $350$\\
                             \hline
\multirow{3}{*}{MEEP}&  $120$&  $11$ & $21$ & $34.14$\\
                     &  $240$&  $109$ & $180$ & $229$\\
                     &  $480$&  $1095$ & $1531$ & $2000$\\
\hline
\end{tabular}
\end{table}
\pzc{For both solvers, we set the relative tolerance as $10^{-7}$. For the Yee-EM-WaveHoltz, we use $\cos$-forcing and filter over $10$ periods. To obtain convergent results for all frequencies, we use $l=10$ in the BICG-Stab-($l$) FDFD solver. In the results displayed in Figure \ref{fig:waveholtz_vs_meep_solution}, we observe that the EM-WaveHoltz and the FDFD agree well. Table \ref{tab:wh_vs_meep_time} presents the computational time needed. The Yee-EM-WaveHoltz code is always faster except for $\omega_0$ and $N=120$. Its advantage increases with mesh refinement and the size of the frequency. In  Table \ref{tab:wh_vs_meep_iter}, we present the total number of iterations needed for convergence. The Yee-EM-WaveHoltz always needs \revtwo{fewer iterations}{less number of iterations} for convergence. Moreover, for a fixed frequency, the number of iteration needed by Yee-EM-WaveHoltz almost does not grow as the grid is refined, while the BICG-Stab-($10$) needs more iterations.}

\begin{table}[]
\caption{Total number of iterations with $10^{-7}$ relative residual}
\label{tab:wh_vs_meep_iter}
\centering
\begin{tabular}{|c|c|c|c|c|c|c|}
\hline
& $N$ & $\omega = \omega_0$ & $\omega = 2.24\omega_0$  & $\omega = 2.7\omega_0$\\
\hline 
\multirow{3}{*}{EM-WH}& $120$& $8$ & $11$ & $14$\\
                      & $240$& $8$ & $11$ & $15$\\
                      & $480$& $8$ & $11$ & $15$\\
                             \hline
\multirow{3}{*}{BICG-Stab($l$)}&  $120$& $144$& $256$ & $405$\\
                     &  $240$& $280$& $449$ & $557$\\
                     &  $480$& $586$& $800$ & $1092$\\
\hline
\end{tabular}
\end{table}
 
\subsection{\pzc{Comparison with a direct  FDFD solver}}
\begin{figure}[]
\centering
\includegraphics[width=0.47\textwidth]{./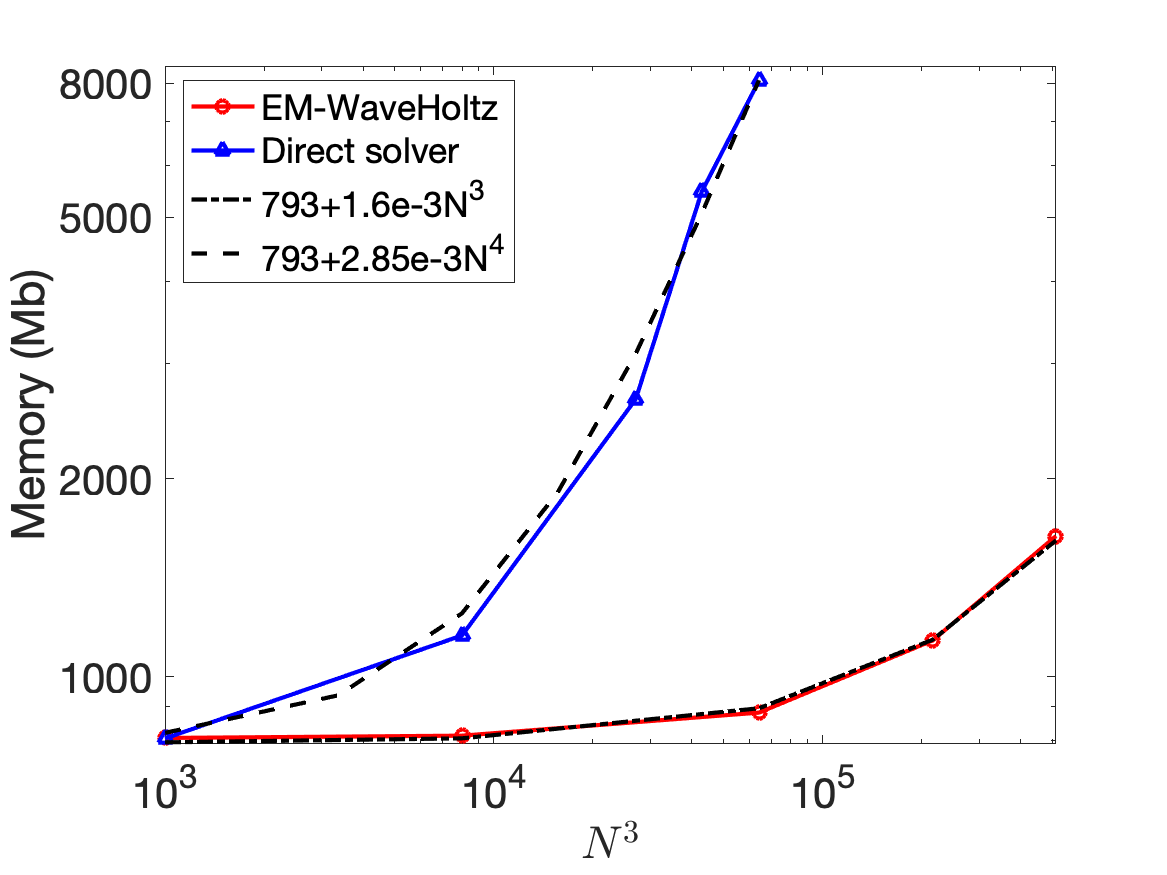}
\caption{Peak memory for EM-WaveHoltz and a direct solver based on sparse multifrontal LU factorization for a 3D problem with PEC boundary conditions}
\label{fig:memory_compare}
\end{figure}
\pzc{Sparse multifrontal direct FDFD solvers are fast, however, for the full 3D problem and with increasing frequency such solvers quickly become too large to fit in memory. To demonstrate this we consider a 3D problem with $\omega=12.5$, PEC boundary conditions and a source $J_x=\omega y(y-1)z(z-1)+\frac{2}{\omega}( y(y-1)+z(z-1) )$, $J_y=\omega x(x-1)z(z-1)+\frac{2}{\omega}( x(x-1)+z(z-1) )$ and $J_z=\omega x(x-1)y(y-1)+\frac{2}{\omega}( x(x-1)x+y(y-1) )$. We use $N$ grid points in each direction. We implement an Yee-EM-WaveHoltz code and a direct FDFD code in Julia.  Both codes share exactly the same spatial discretization subroutines based on sparse matrices. \revtwo{GMRES solver with relative tolerance $10^{-8}$ is applied in the EM-WaveHoltz code.}{} The direct solver uses Julia's sparse multifrontal LU factorization, which calls SuiteSparse \cite{davis2011university}. As shown in Figure \ref{fig:memory_compare}, we observe that the peak memory needed by the EM-WaveHoltz scales  roughly  as  $O(DOF)=O(N^3)$, and the peak memory needed by the direct solver scales roughly as $O(DOF^{4/3})=O(N^4)$.}

\subsection{\pzc{Grid convergence} of Yee-EM-WaveHoltz}
We \pzc{consider the 2D TM model} and non-dimensionalize the equations so that $\eps=\mu=1$ and manufacture a forcing 
\begin{align*}
\pzc{
J}&\pzc{=16\omega x^2(x-1)^2y^2(y-1)^2+\frac{32}{\omega}\big(}
 \\ &\pzc{(6x^2-6x+1)y^2(y-1)^2
+(6y^2-6y+1)x^2(x-1)^2\big)}
\end{align*} so that the exact solution is $E_z(x,y)=16x^2(x-1)^2y^2(y-1)^2$. This solution is compatible with perfect electric conductor (PEC) boundary conditions on the domain $[0,1]^2$. We apply the Yee scheme and the EM-WaveHoltz iteration with GMRES acceleration. \revtwo{The relative tolerance of the GMRES solver is set as $10^{-10}$.}{}

To test the convergence for a) one frequency in one solve, and b) multiple frequencies in one solve, we perform a grid refinement study at fixed frequencies $\omega_1 = 5.5$, $3\omega_1$ and $7\omega_1$.  
In Figure \ref{fig:grid_convergence},  \pzc{without eliminating the temporal error}, we display how the error is decreased as the grid size is reduced. As expected, for both of one frequency in one solve and multiple frequencies in one solve, we observe second order convergence. \pzc{For the same frequency and the same mesh, the magnitude of the errors for one frequency and multiple frequencies in one solve are close to each other.}

\pzc{ We also use this example to investigate the influence of the temporal error in the EM-WaveHoltz method. In Table \ref{tab:compare_error_with_fdfd}, due to the temporal error, we observe that the error of the direct FDFD solver is smaller than the Yee-EM-WaveHoltz for for $\omega=16.5$ and $\omega=37.5$, but slightly bigger for $\omega=5.5$. For $\omega=5.5$, it is likely that the sign for the temporal error and the spatial error are different.}

\pzc{Finally, we verify the effectiveness of our strategy to eliminating the temporal error. As shown in Table \ref{tab:eliminate_time_error}, the difference between the FDFD solution and the Yee-EM-WaveHoltz solution after eliminating temporal error are at most $O(10^{-12})$, which is much smaller than the numerical error.}

\begin{figure}[]
\centering
\includegraphics[width=0.47\textwidth]{./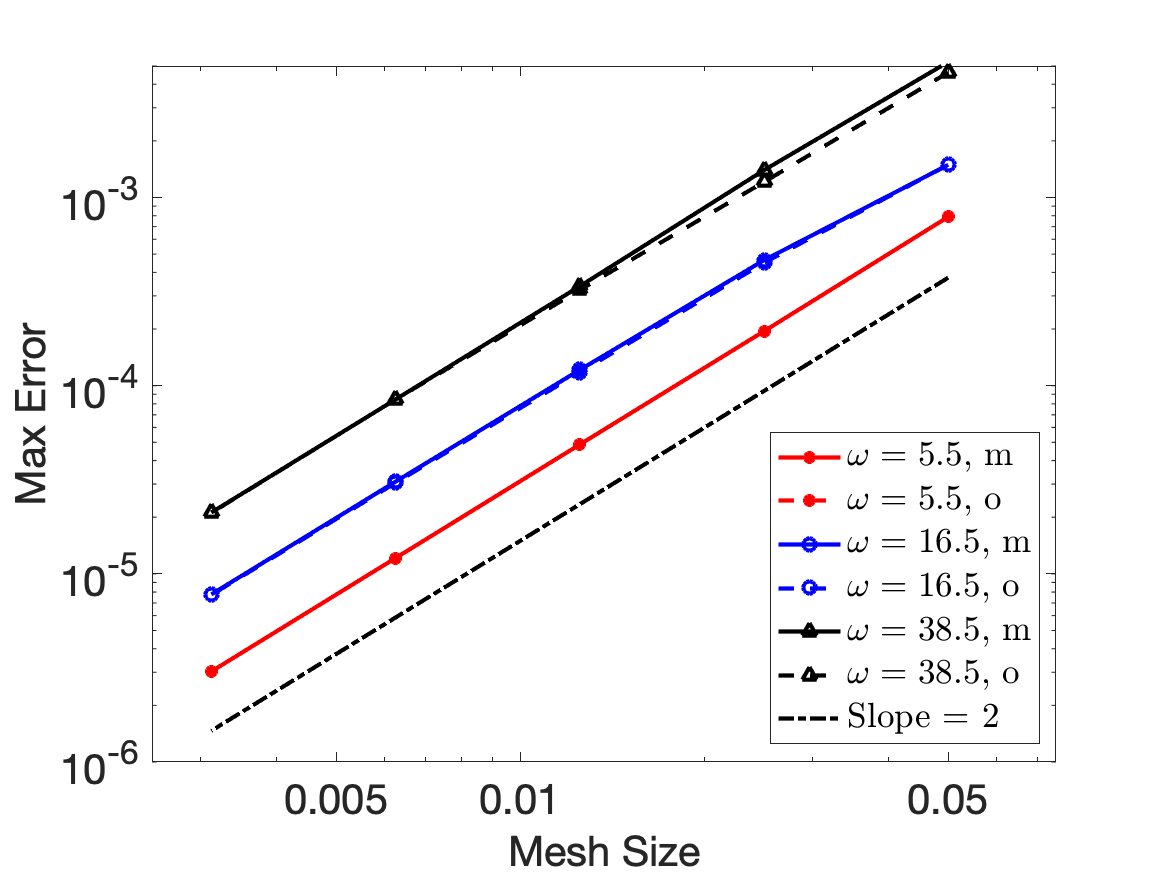}
\caption{Grid convergence of a manufactured solution. Here ``o" stands for one frequency in one solve, and ``m" stands for multiple frequencies in one solve. The errors displayed are for $E_z$ for a 2D TM model. }
\label{fig:grid_convergence}
\end{figure}

\begin{table}[!htb]
\caption{Errors for the manufacture solution of 2D TM model with $N=160$ grids in each direction.}
\label{tab:compare_error_with_fdfd}
\centering
\begin{tabular}{|c|c|c|c|c|c|c|}
\hline
$\omega$ & 5.5 &16.5  & 35.5  \\
\hline 
EM-WH with temporal error & 1.21e-5 & 3.09e-5 & 8.45e-5 \\
\hline
Direct-FDFD & 1.73e-5& 2.24e-6 & 2.26e-7 \\
\hline
\end{tabular}
\vspace*{0.5\baselineskip}
\newline
\caption{Difference between Yee-EM-WaveHoltz solution after eliminating the temporal error and the FDFD solution with $N=160$ grids in each direction}
\label{tab:eliminate_time_error}
\centering
\begin{tabular}{|c|c|c|c|c|c|c|}
\hline
$\omega$ & 5.5 &16.5  & 37.5  \\
\hline 
Difference & 7.38e-15 & 9.22e-14 & 2.24e-12 \\
\hline
\end{tabular}
\end{table}

\subsection{Plane wave scattering and $p$-convergence of DG-EM-WaveHoltz\label{sec:DG}}
Next we combine the EM-WaveHoltz iteration with the upwind nodal discontinuous Galerkin method \cite{HesthavenWarburton02}. \pzc{We consider the 2D TM-model and the scattering wave from a PEC disk due to the incident plane wave $\revtwomath{E_z^{\textrm{inc}}=e^{-i\omega x}}$ with $\omega=15$. The radius of the disk is $a=0.25$. The exact solution of this problem is a Mie series (see e.g. \cite{petropoulos2000reflectionless}), and is presented in Fig. \ref{fig:scatter_dg}). The incident wave is imposed by setting the boundary value $E_z^{bc}$ equal to the exact solution. In the EM-WaveHoltz formulation, the boundary condition of the time-domain problem is defined as $\Re\{E_z^{\textrm{bc}}\}\cos(\omega t)+\Im\{E_z^{\textrm{bc}}\}\sin(\omega t)$. With this choice, one can show that the EM-WaveHoltz converges to the real part of the frequency-domain solution. For a  $p$-th degree polynomial spatial discretization  we use a $p+1$-th order Taylor series method in time and filter over $5$ periods.}

\pzc{We perform a $p$-convergence study with the unstructred mesh in Fig. \ref{fig:scatter_dg}. The maximum error and the number of iterations for convergence with relative tolerance $10^{-8}$ are shown in Table \ref{tab:dg_p_convergence}. As the polynomial order $p$ increases, the error decays, and high order schemes achieves $O(10^{-6})$ error on this relatively coarse mesh. As the polynomial order $p$ increases, number of points per wavelength grows, but the total number of iterations for convergence does not grow.}

\begin{table}[!thb]
\caption{Maximum error and the total number of iterations for the scattering wave from a PEC disk, with DG-EM-WaveHoltz and $p$-th order polynomial}
\label{tab:dg_p_convergence}
\centering
\begin{tabular}{|c|c|c|c|c|c|c|c}
\hline
$p$  &  3 & 4 & 5 & 6 & 7 & 8  \\ \hline
Error & 2.86e-2 & 4.65e-3 & 6.89e-4 & 6.40e-5 & 7.89e-6 & 1.37e-6 \\ \hline
Iterations & 45 & 39 & 38 & 38 & 37 &37\\
\hline
\end{tabular}
\end{table}

\begin{figure}[!htb]
\centering
\includegraphics[width=0.24\textwidth,trim={2.2cm 0.0cm 2.6cm 0.9cm},clip]{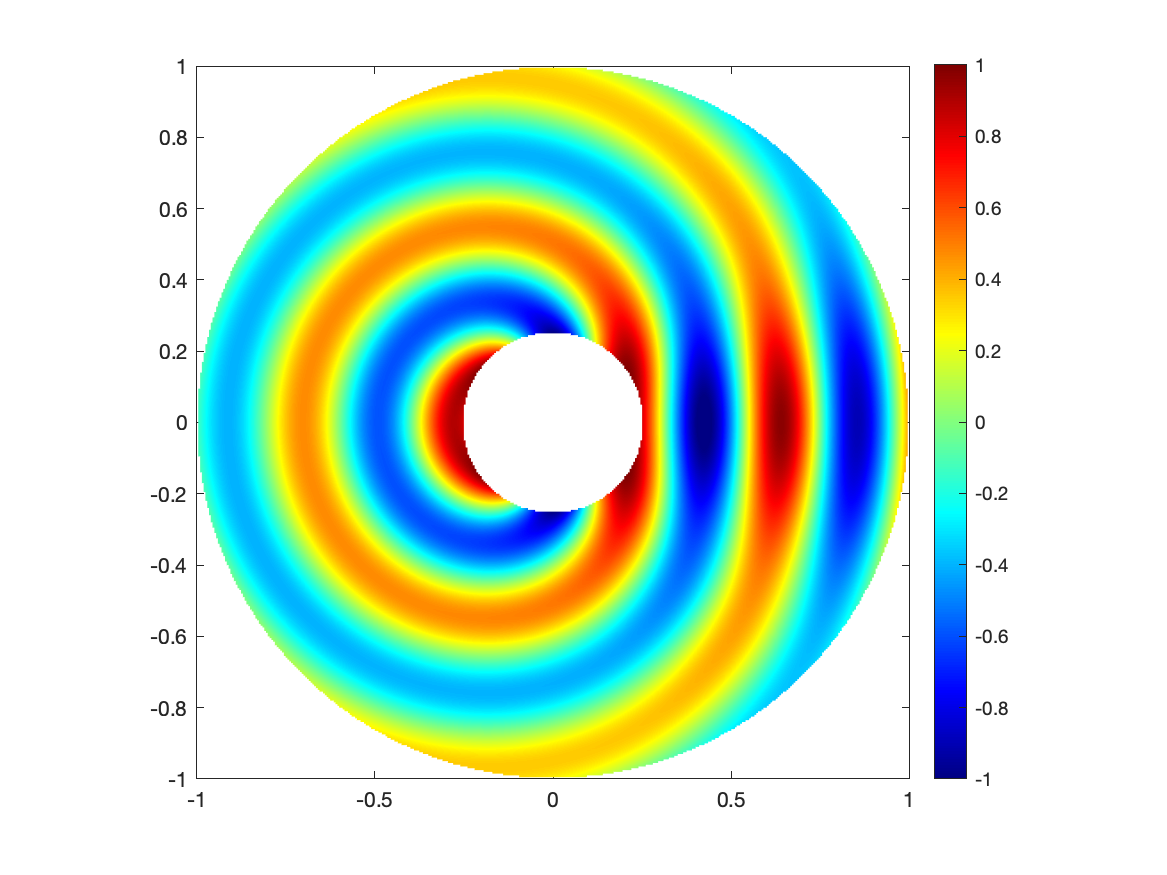}
\includegraphics[width=0.24\textwidth,trim={2.2cm 0.0cm 2.6cm 0.9cm},clip]{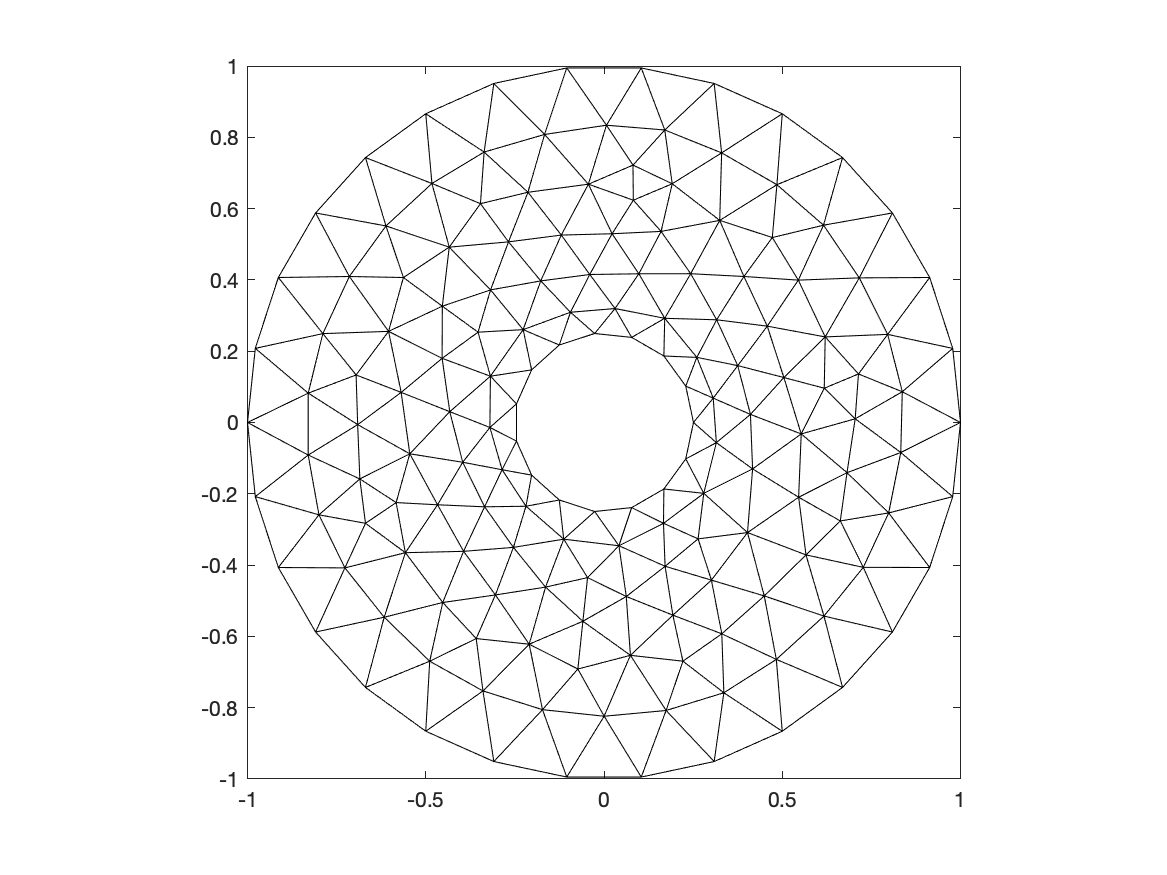}
\caption{Left figure: exact solution for $\omega=15.0$ of the scattering problem. Right figure: Unstructured mesh.\label{fig:scatter_dg}}
\end{figure}

\subsection{\revtwo{Plane wave scattering with incident fields and Yee scheme}{}}
\revtwo{We consider the same plane wave scattering problem as Section \ref{sec:DG} with the incident plane wave $E_z^{\textrm{inc}}=e^{-i\omega x}$ and $\omega=15$. To show the capability of using a field source, we apply the Yee-EM-WaveHoltz method and impose the incident fields following the total field/scattered field formulation in \cite{Taflove:2005fk}. We split the computational domain into a total field region $[-0.5,0.5]^2$ and a scattered field region. The incident wave is imposed through the interface condition between the two regions, and the double absorbing boundary layer (DAB) by Hagstrom et al. \cite{lagrone2016double} is applied to impose the nonreflecting boundary conditions. The setup of the Yee scheme is illustrated in the left figure of Fig. \ref{fig:scatter_yee}. 
To impose the right going incident wave, we follow similar arguments to \eqref{eq:maxwell_time_details} and define the corresponding time-domain incident fields as}{}
\begin{align}
&\revtwomath{\WBE_z^{\textrm{inc}}= \cos(\omega x)\sin(\omega t)-\sin(\omega x)\cos(\omega t)}\notag\\
&\qquad\revtwomath{=\sin(\omega(t-x))=\Im\{e^{i\omega(t-x)}\},}\notag\\
&\revtwomath{\WBH^{\textrm{inc}}_x = 0,\quad \WBH^{\textrm{inc}}_y = -\WBE_z^{\textrm{inc}}.}
\end{align}
\revtwo{We use a $401\times 401$ uniform mesh, and set the relative tolerance of the GMRES solver as $10^{-8}$. The numerical solution matches the exact solution very well (see right figure of Fig. \ref{fig:scatter_yee}).}{}
\newsavebox{\tempfig}
\begin{figure}[!htb]
\centering
 \savebox{\tempfig}{\includegraphics[width=0.23\textwidth,trim={2.2cm 0.0cm 2.6cm 0.9cm},clip]{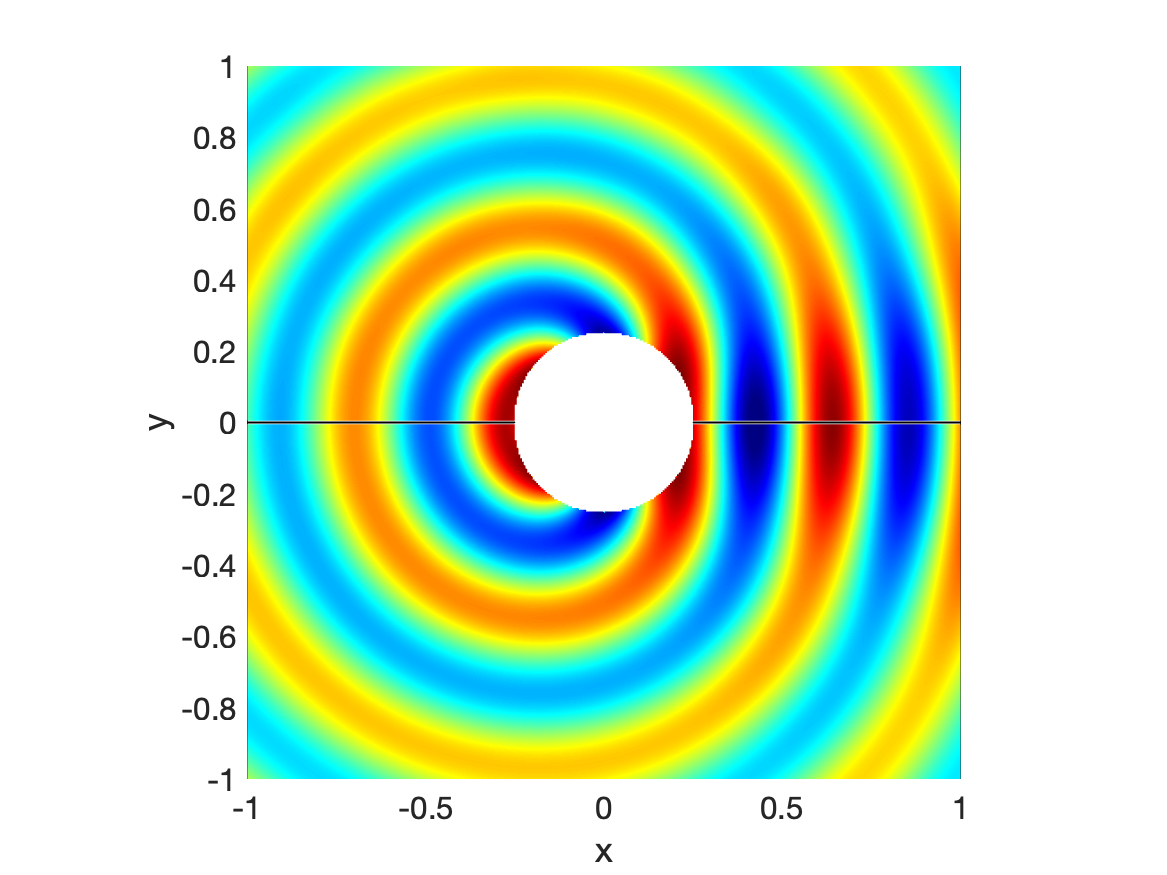}}
\raisebox{\dimexpr\ht\tempfig-\height}{\includegraphics[width=0.25\textwidth,trim={0.0cm 0.0cm 0.2cm 0.0cm},clip]{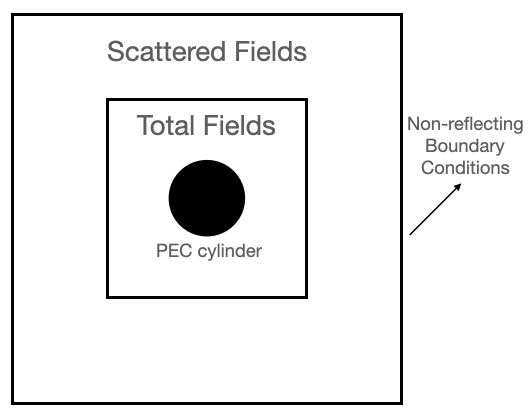}}
\includegraphics[width=0.23\textwidth,trim={2.2cm 0.0cm 2.6cm 0.9cm},clip]{picture/mie_yee.png}
\caption{\revtwomath{Left figure:  the set-up of the total field/scattered field formulation. Right figure: the real part of the scattered wave with $\omega=15$, numerical solution on the top and exact solution on the bottom.\label{fig:scatter_yee}}}
\end{figure}

\subsection{Number of iterations for different frequencies and boundary conditions in two dimensions}\label{sec:2d_convergence}
In this experiment we solve the 2D TM model with the source 
\begin{align}
J_z = -\omega \exp\left(-\sigma( (x-0.01)^2+(y-0.015)^2)\right),
\end{align}
where $\sigma = \max(36,\omega^2)$, $\eps=\mu=1$, and  the computational domain is $[-1,1]^2$. Here we sweep over the frequencies \mbox{$\omega=k+\half$}, \mbox{{$1\leq k\leq 100$}}. We use the GMRES accelerated Yee-EM-WaveHoltz iteration and to keep the solution reasonably well resolved we use $8\lceil\omega\rceil$ grids in each directions, \pzc{where $\lceil \omega \rceil$ is the smallest integer larger than $\omega$}.

We solve this problem with $6$ different boundary conditions: (1) 4 open boundaries, (2) 1 PEC boundary and 3 open boundaries, (3) 2 parallel PEC boundaries and 2 open boundaries, (4) 2 PEC boundaries next to each other forming a PEC corner and 2 open boundaries, (5) $3$ PEC boundaries and $1$ open boundary, (6) 4 PEC boundaries. The rationale here is that in problems (1), (2) and (4) there will not be any opposing PEC walls where waves can be ``trapped" while in the other three problems there are.  

\begin{figure}[!htb]
\centering
\includegraphics[width=0.47\textwidth]{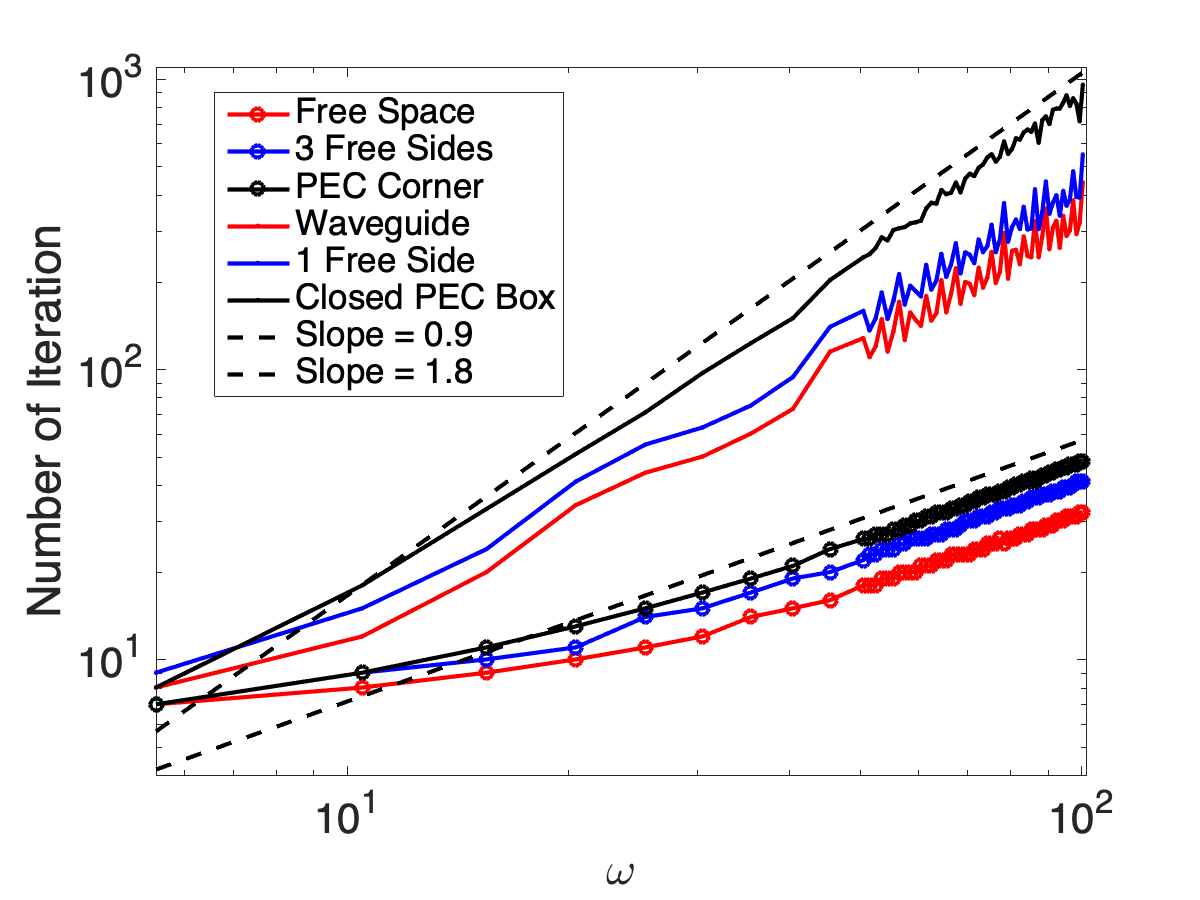}
\caption{Number of iterations as a function of frequency for the six different 2D TM problems. \label{fig:iteration_vs_omega_2d}}
\end{figure}

We also note that in the open directions we employ the optimally accurate double absorbing boundary layer (DAB) by Hagstrom et al. \cite{lagrone2016double}. The order of approximation in the DAB layers we use is 10 which virtually makes the non-reflecting boundary conditions exact.

In the EM-WaveHoltz iteration, we use $10$ periods so that $T=10\frac{2\pi}{\omega}$. This reduces the memory consumption in GMRES by a factor 10 and reduces the number of iterations by nearly a factor of 10 (the cost per iteration of course goes up by 10 as well). In Fig. \ref{fig:iteration_vs_omega_2d}, the number of iteration required to reduce the relative residual below $10^{-7}$ are presented. We observe that for the problems without trapped waves, the number of iterations scales approximately as $\omega^{0.9}$.  For the problems with trapped waves, the iteration converges slower and the number of iterations scales as approximately $\omega^{1.8}$.

\subsection{Number of iterations for different frequencies and boundary conditions in three dimensions}\label{sec:3d_convergence}
In this example we solve the 3D Maxwell's equation with a source 
\begin{align}
J_x = -\omega \exp\left(-\sigma(x^2+y^2+z^2)\right),\; J_y=J_z=0.
\end{align}
Here  $\sigma = \max(36,\omega^2)$, $\eps=\mu=1$ and the  computational domain is  $[-1,1]^3$. 

To measure how the number of iterations grow with the frequency, three different problems are considered: (1) an open domain, (2) two parallel PEC plates, (3) five PEC boundaries and one free side on the most left side. Again, we still apply the highly accurate double absorbing boundary layer (DAB) for non-reflecting boundary conditions. 
The order of the DAB layers is set as $5$ guaranteeing that the error of the non-reflecting boundary conditions is well below the discretization error.

We sweep over frequencies and use the GMRES accelerated Yee-EM-WaveHoltz method with the $\cos$-forcing. To have a well resolved solution we use $4\lceil\omega\rceil$ elements in each direction, \pzc{where $\lceil\omega\rceil$ is the smallest integer larger than $\omega$}. In the EM-WaveHoltz iteration, we set $T$ as $5$ periods.

In Figure \ref{fig:iteration_vs_omega_3d}, iteration numbers to reduce the relative residual below $10^{-6}$ are presented. The total number of iterations is estimated to scale as $\omega^{0.9}$ for the open problem, $\omega^{1.9}$ for the parallel PEC plate problem and $\omega^{2.5}$ for the problem with one  free side.

We also use the the $\sin$-forcing to simulate the open domain problem with the same mesh and the error tolerance. We observe that the number of iteration is exactly the same as the $\cos$-forcing, though the relative residual is slightly different for high frequencies.
\begin{figure}[!htb]
\centering
\includegraphics[width=0.5\textwidth]{./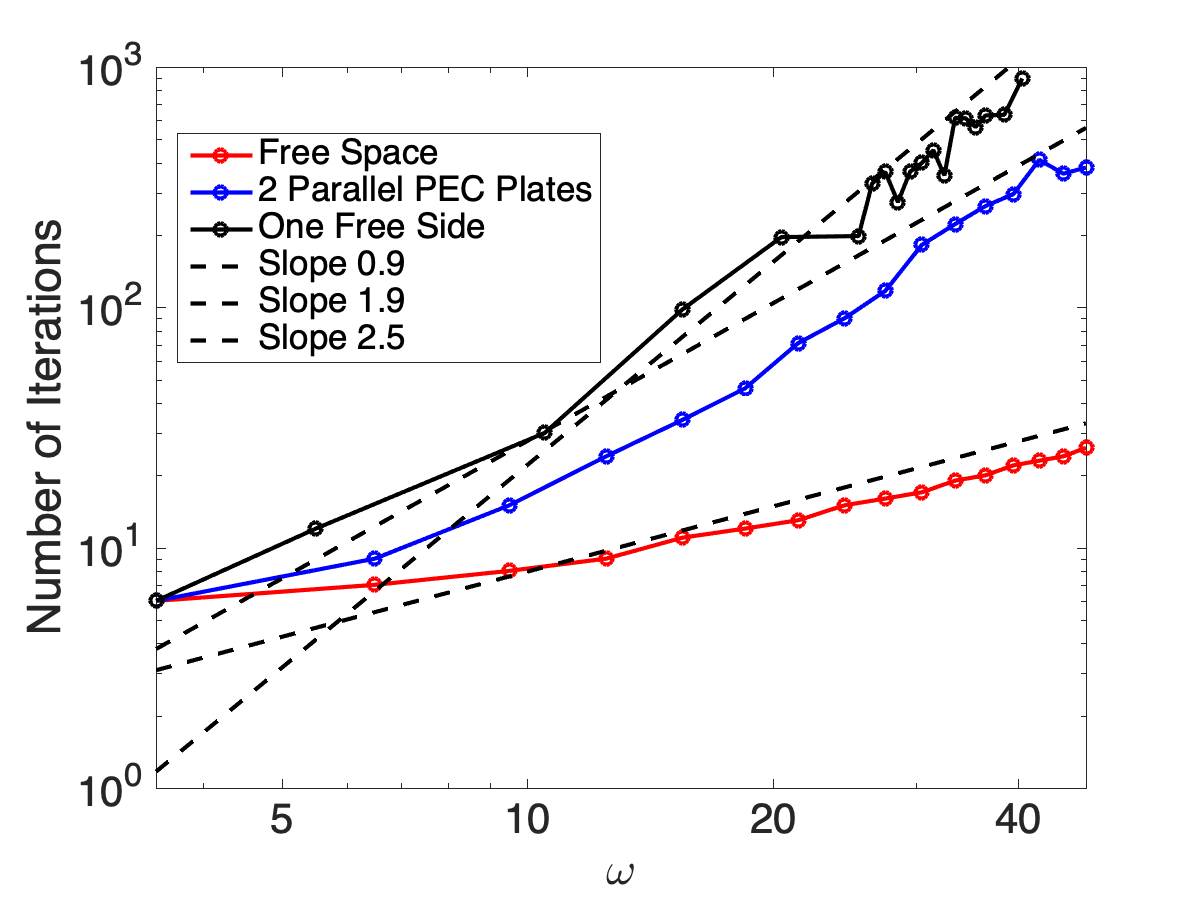}
\caption{Number of iterations as a function of frequency for different 3D problems.}
\label{fig:iteration_vs_omega_3d}
\end{figure}

\subsection{Number of iterations for different points per wavelength\label{sec:iteration_vs_ppw}}
\begin{table}[!thb]
\caption{Number of iterations for different number of points per wavelength, 2D-TM model, \pzc{$\lceil\omega\rceil=\textrm{ceil}(\omega)$.}}
\label{tab:iteration_vs_ppw_2d}
\centering
\begin{tabular}{|c|c|c|c|c|c|c|c}
\hline
Boundaries & $N$  &$2\lceil\omega\rceil$  &$4\lceil\omega\rceil$& $6\lceil\omega\rceil$& $8\lceil\omega\rceil$ \\
\hline
open & $\omega=12.5$  & 9 & 9 & 9 & 9\\ \hline
PEC &$\omega=12.5$  & 11 & 11 & 11 &11\\ \hline
open & $\omega=25.5$ & 13 & 12 &12 & 12\\ \hline
PEC &$\omega=25.5$& 25 & 24 &24 & 24\\ \hline
\end{tabular}
\end{table}
Here, we fix the frequency and \pzc{more systematically} investigate the number of iterations needed for convergence for different number of grid points per wavelength. We use the Yee-EM-WaveHoltz method with GMRES acceleration, and $\eps=\mu=1$ is considered. For the 2D-TM model, we consider the computational domain $[-1,1]^2$ and the source
\begin{align}
J_z = \omega\exp(-144(x^2+y^2)),
\end{align}
with either 4 open boundaries or 4 PEC boundaries. For the 3D model, we consider the computational domain $[-1,1]^3$ and the source
\begin{align}
J_x = -\omega \exp\left(-144(x^2+y^2+z^2)\right),\; J_y=J_z=0,
\end{align}
with either 6 open boundaries or 6 PEC boundaries. In each direction, we use $\pzc{N+1}$ grid points. We take $T=10\frac{2\pi}{\omega}$.
The stopping criteria is that the relative residual falls below $10^{-8}$ for the 2D TM-model and $10^{-5}$ for the 3D model. 

The results are presented in Table \ref{tab:iteration_vs_ppw_2d} and Table \ref{tab:iteration_vs_ppw_3d}. When considering an open problem at a fixed frequency, we observe that the number of iterations does not change as the number of grid points per wavelength is increased. For the PEC problem in three dimensions the number of \revtwo{}{of} iterations are reduced sightly as the resolution is increased and for the 2D PEC problem it does not change significantly. Based on this experiment and \pzc{other experiments},  our observation is that the algorithm is robust with respect to resolution (but of course the discretization error will depend on the resolution.) 

\begin{table}[!htb]
\caption{Number of iterations for different number of points per wavelength, 3D. \label{tab:iteration_vs_ppw_3d}}
\centering
\begin{tabular}{|c|c|c|c|c|c|c|c}
\hline
Boundaries & $N$  &$2\lceil\omega\rceil$  &$4\lceil\omega\rceil$& $6\lceil\omega\rceil$& $8\lceil\omega\rceil$ \\
\hline
open & $\omega=12.5$  & 5 & 5 & 5 & 6\\ \hline
PEC &$\omega=12.5$  & 26 & 24 & 23 &22\\ \hline
open & $\omega=25.5$ & 7 & 7 &7 & 7\\ \hline
PEC &$\omega=25.5$& 174 & 153 &150 & 135\\ \hline
\end{tabular}
\end{table}

\subsection{Smaller Krylov subspaces by longer filter time}
As we mentioned before, we can filter over multiple periods $T=N\frac{2\pi}{\omega}$, which allows the further propagation of the wave. We consider $T=N\frac{2\pi}{\omega}$ with $N=1,3,5$ for 2D and 3D open domain problem. The setup of this test is the same as Section \ref{sec:2d_convergence} for 2D and Section \ref{sec:3d_convergence} for 3D. We scan over different frequencies and apply the GMRES accelerated Yee-EM-WaveHoltz. The total number of iteration allowed  is set as $200$ in 2D and $100$ in 3D. As can be seen in Figure \ref{fig:memory_lean} for high frequencies both the 2D and the 3D solver, when using $T=\frac{2\pi}{\omega}$, fails to converge to the desired tolerance before reaching the maximum number of iterations. 

In Figure \ref{fig:memory_lean}, we present unscaled number of iterations against the frequency and observe that the number of iterations decays as the  propagation time $T$ in the time-domain grows. To further quantify the relation between the computational cost and $T=N\frac{2\pi}{\omega}$, we scale the number of iterations by $N$ and present the result in Figure \ref{fig:memory_lean_scaled}. We observe that for $N = 3$ and 5 the scaled curves collapse, implying that the the total computational time is approximately the same. Thus, without increasing the computational cost, filtering over longer time can decrease the number of iterations, which  in turn reduces the size of the Krylov subspace used by GMRES. 

\begin{figure}[htb]
\centering
\includegraphics[width=0.24\textwidth,clip]{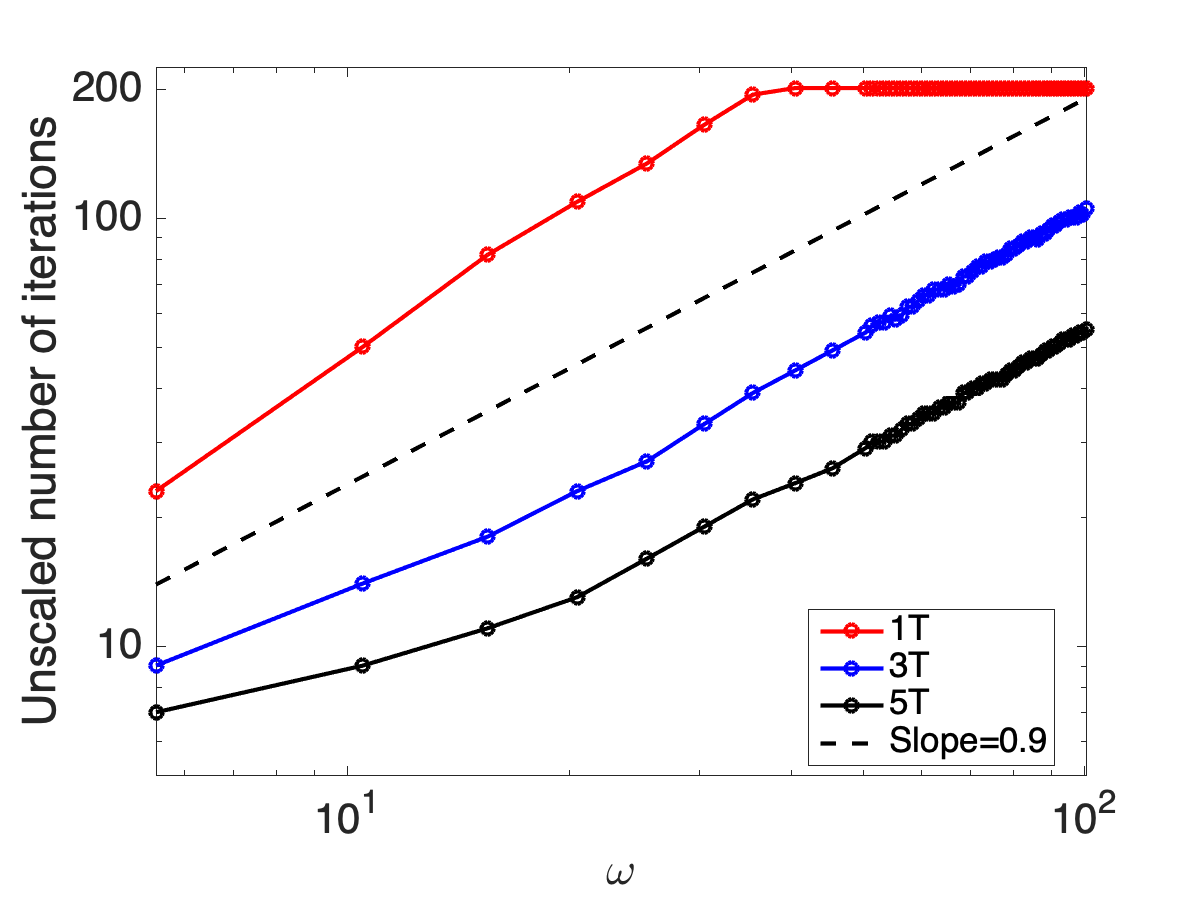}
\includegraphics[width=0.24\textwidth,clip]{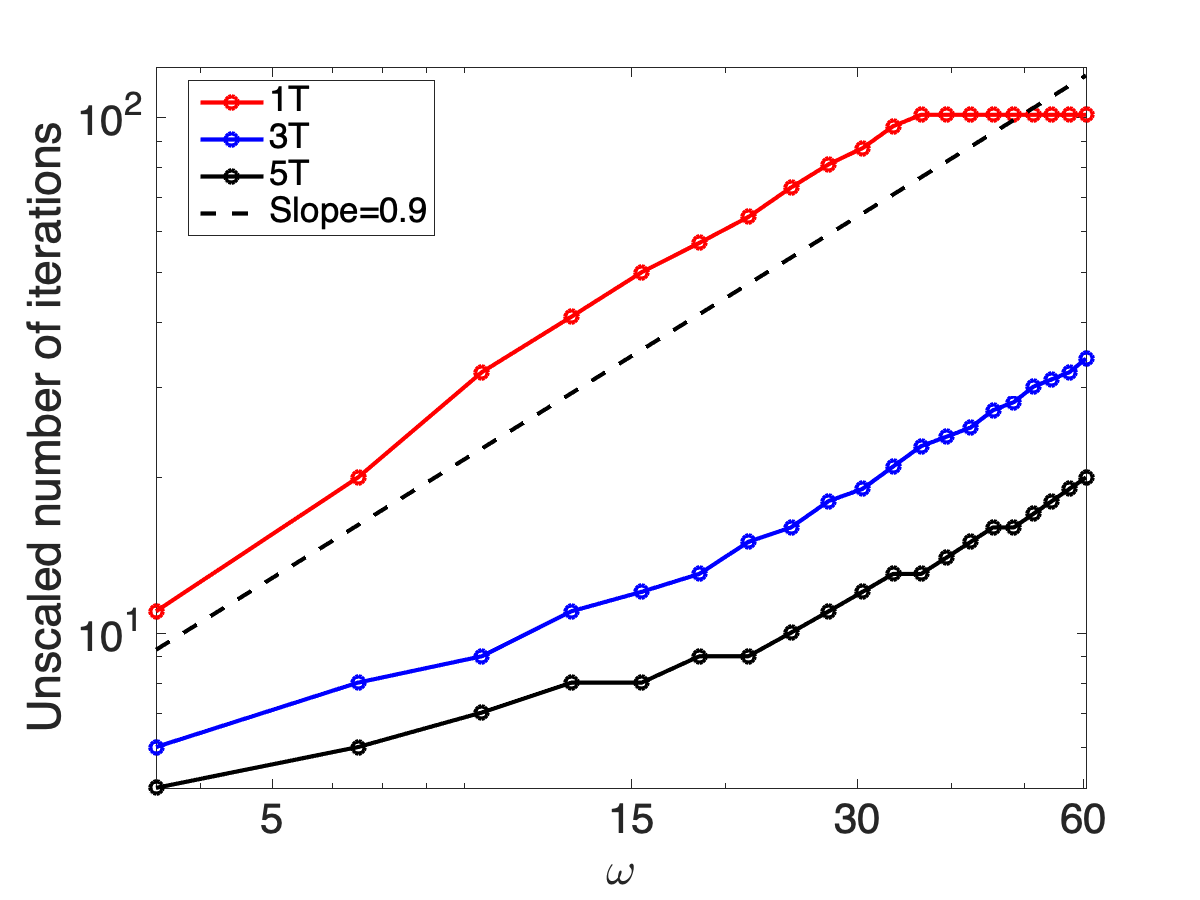}
\caption{Unscaled number of iterations as a function of frequency for different filtering time. Top: 2D open problem. Bottom: 3D open problem.  \label{fig:memory_lean}}
\end{figure}
\begin{figure}[htb]
\centering
\includegraphics[width=0.24\textwidth,clip]{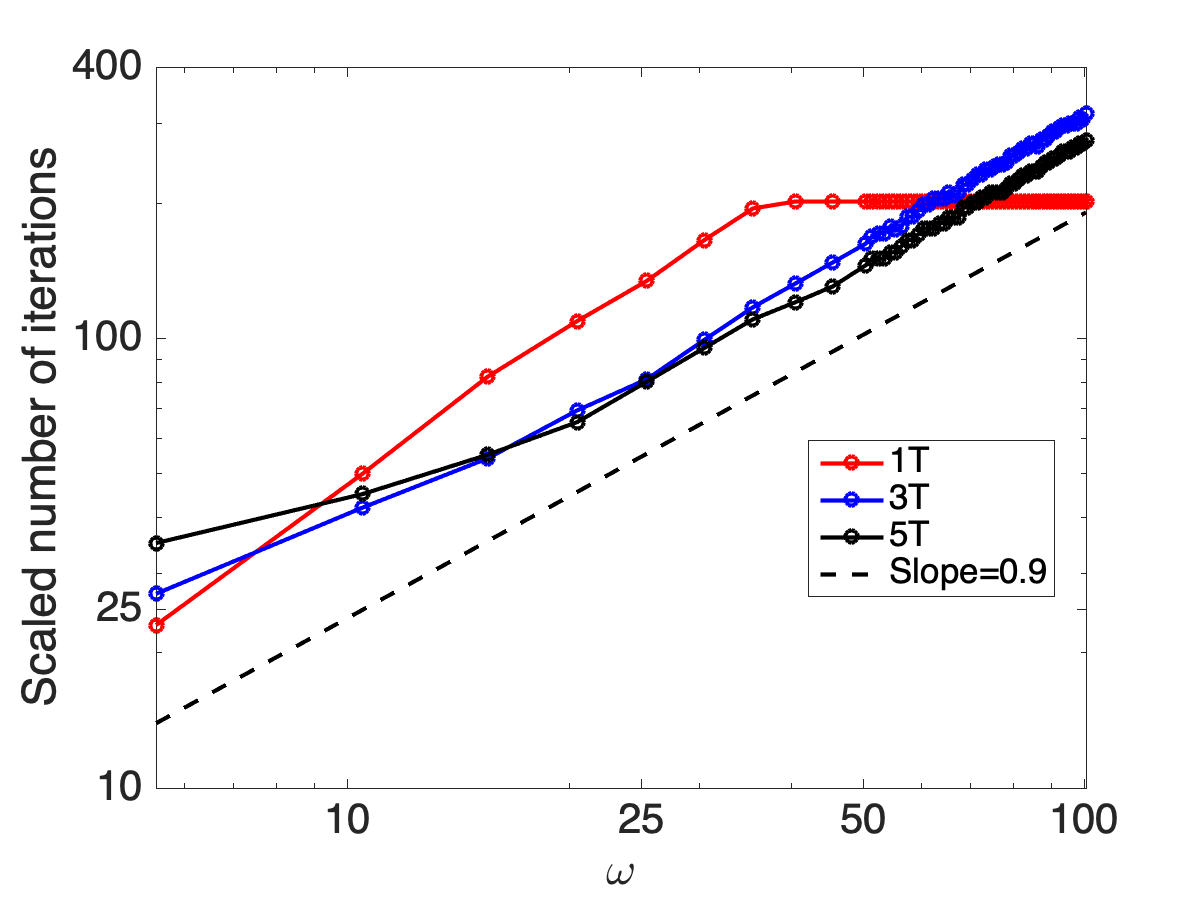}
\includegraphics[width=0.24\textwidth,clip]{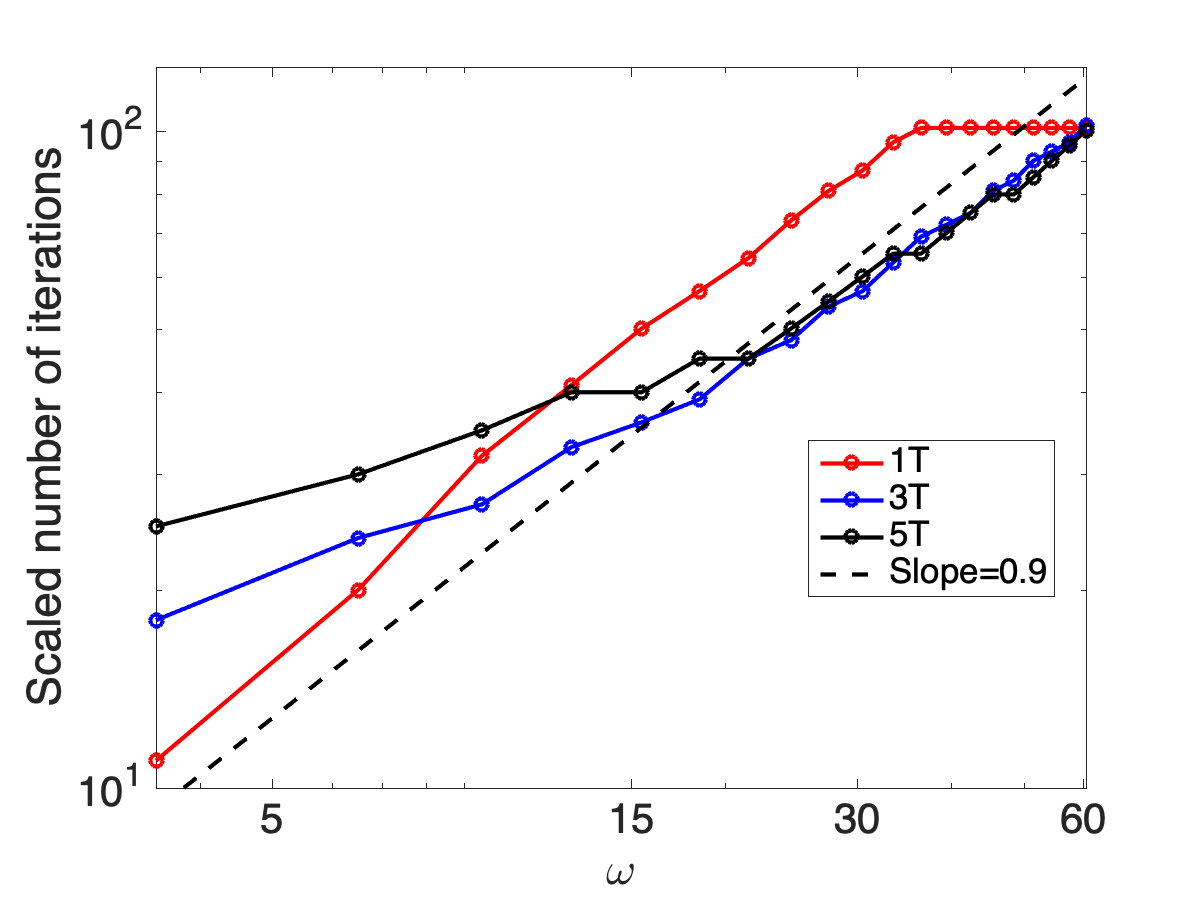}
\caption{Scaled number of iterations = $N\times$ number of iterations, if $T=N\frac{2\pi}{\omega}$. Scaled number of iterations as a function of frequency for different filtering time. Top: 2D open problem. Bottom: 3D open problem.  \label{fig:memory_lean_scaled}}
\end{figure}

\subsection{\pzc{SPD structure and condition numbers} for energy conserving 2D TM-model}
Here, we consider the 2D TM model with PEC boundary conditions, $\eps=\mu=1$, \pzc{$\omega=10$} and the Gaussian source
\begin{align}
J_z = -\omega \exp\left(-\omega^2( x^2+y^2)\right),
\end{align}
on the computational domain $[-1,1]^2$. We apply the Yee-EM-WaveHoltz method, and \pzc{filter over $10$ periods in the time-domain.} We use this example to verify that our method results in a SPD linear system. The code is implemented in a matrix free manner. The matrix $I-S$ is constructed column by column through the calculation of matrix-vector multiplication $(I-S)e_i$ where $e_i$ is a column vector whose $i$-th element is 1 and all other elements are 0.

 \pzc{We use $N=10,20,\dots,100$ elements in each direction.} We first compare $I-S$ and $(I-S)^T$. The $L_{\infty}$ norm of their difference is always on the level of machine accuracy. \pzc{The smallest eigenvalue of the resulting linear system is always positive. Moreover, when $N$ is large enough to resolve the wave structure, the condition number of the resulting matrix does not increase as $N$ grows. The condition numbers are $79.1305$ for $N=90$,  $75.5935$ for $N=95$ and $72.7603$ for $N=100$. This matches our previous observation that the total number of iterations does not grow with increased number of points per wavelength.}

\section{Conclusion}
In this paper, we proposed the EM-WaveHoltz method, which converts the frequency-domain problem into a time-domain problem with time periodic forcing. The main advantages of the proposed method are as follows.
\begin{enumerate}
\item The resulting linear system is positive definite, and the GMRES iterative solver converges reasonably fast even though  no preconditioning was used. 
\item The method is flexible and straightforward to implement, it only requires a time-domain solver. In this paper, we illustrated how either the classical Yee scheme or a discontinuous Galerkin method can be used to construct frequency-domain solvers.  
\item A unique feature of our EM-WaveHoltz method is that the solution to multiple frequencies can be obtained in a singe solve. 
\end{enumerate}
Potential future research directions are to design preconditioning strategies to further accelerate the convergence of the proposed iterative method. It would also be interesting to apply the method to more advanced dispersive material models.

\appendices
\section{Derivation of the EM-WaveHoltz iteration with $\cos$-forcing}\label{sec:cos_forcing}
The  real valued  $T = 2\pi/ \omega$-periodic solutions to \eqref{eq:maxwell_time_cos} is in the form:
\begin{align}
\WBE &= \hat{{\bf E}}_0 \cos(\omega t) + \hat{{\bf E}}_1 \sin(\omega t), \\
\WBH &= \hat{\bf{H}}_0 \cos (\omega t) + \hat{\bf{H}}_1 \sin (\omega t).
\end{align}
Matching the $\sin(\omega t)$ and $\cos(\omega t)$ term, we reach
\begin{align*}
\eps \omega (- \hat{{\bf E}}_0 )  &= \nabla\times  \hat{\bf{H}}_1 , \\
\eps \omega (\hat{{\bf E}}_1 )  &=  \nabla\times  \hat{\bf{H}}_0-\BJ , \\
\mu \omega (- \hat{{\bf H}}_0 )  &= - \nabla\times  \hat{\bf{E}}_1, \\
\mu \omega (\hat{{\bf H}}_1 )  &= -\nabla\times  \hat{\bf{E}}_0.
\end{align*}
Based on \eqref{eq:maxwell_frequency_re_im}, it follows that 
$$
\hat{\BE}_0 = \Re\{E\},\;\;\hat{\BH}_0 = \Re\{H\},\;\;\hat{\BE}_1 = -\Im\{E\},\;\;\hat{\BH}_1 = -\Im\{H\}.
$$
By construction, one can further verify that $\Pi (\Im\{\BE\},\Im\{\BH\})^T = (\Re\{\BE\},\Re\{\BH\})^T$.

\section{Analysis of energy conserving EM-WaveHoltz iteration}\label{sec:analysis_continuous}
Similar to \cite{appelo2020waveholtz}, we analyze the convergence of the simplified EM-WaveHoltz iteration for the energy conserving  case and show that $I-S$ is a self-adjoint positive definite operator.

Eliminating $\BH$ in the frequency-domain equation \eqref{eq:maxwell_frequency}, we have 
\begin{align}
\label{eq:TM-2nd}
-\omega^2\eps\BE= -\nabla\times\left(\frac{1}{\mu}\nabla \times\BE\right)-i\omega\BJ.
\end{align}
With the real-valued current source $J$, we further have
\begin{align}
\label{eq:TM-2nd2}
-\eps\omega^2\Im(\BE)= -\nabla\times\left(\frac{1}{\mu}\nabla \times\Im(\BE)\right)-\omega\BJ.
\end{align}

Eliminating $\WBH$ in the time-domain equation \eqref{eq:maxwell_time}, we obtain 
\begin{align}
\label{eq:TM-time-2nd}
\eps\partial_{tt}\WBE = -\nabla\times\left(\frac{1}{\mu}\nabla \times\WBE\right)-\omega\cos(\omega t) \BJ,
\end{align}
with $\WBE|_{t=0}=\Bnu_E$ and $\WBE_t = \Bzero$.

Suppose there is an  orthonormal basis of $L^2(\Omega)$ consisted by the eigenfunctions of the operator  $-\frac{1}{\eps}\nabla\times\left(\frac{1}{\mu}\nabla \times\right)$  (For example this holds under the assumptions of Theorem 8.2.4 in \cite{assous2018mathematical}).  Let the eigenfunctions $\{\phi_j\}_{j=1}^\infty$ consist an orthonormal basis of the $L^2$ space. Let $\{-\lambda_j^2\}_{j=1}^\infty$ denote the corresponding nonpositive eigenvalues. For simplicity of notations, we let $\Bnu_E=\Bnu$. Then, $\BE$, $\WBE$, $\BJ$, $\Bnu$ can be expanded as:
\begin{align}
\begin{split}
&\BE = \sum_{j=1}^\infty \BE_j\phi_j,\; \WBE = \sum_{j=1}^\infty \WBE_j\phi_j,\\
&\BJ = \sum_{j=1}^\infty \BJ_j\phi_j, \;\Bnu = \sum_{j=1}^\infty \Bnu_j\phi_j.
\end{split}
\end{align} 
Solve \eqref{eq:TM-2nd} and \eqref{eq:TM-time-2nd}:
\begin{align}
&\BE_j = \frac{\frac{1}{\eps}\omega \BJ_j}{\lambda_j^2-\omega^2},\\
&\WBE_j = \BE_j\left(\cos(\omega t)-\cos(\lambda_j t)\right)+\Bnu_j\cos(\lambda_j t).
\end{align}
Then, 
\begin{align}
\Pi\Bnu = \sum_{j=1}^\infty \Bbnu_j\phi_j, \; \bnu_j=(1-\beta(\lambda_j) )\BE_j + \beta(\lambda_j) \Bnu_j,
\label{eq:nu_bar}
\end{align}
where 
\begin{align}
\beta(\lambda) = \frac{2}{T}\int_{0}^T \left(\cos(\omega t)-\frac{1}{4}\right)\cos(\lambda t)dt.
\end{align}

Realizing that 
\begin{align}
\Pi \Bzero &= \sum_{j=1}^\infty\left((1-\beta(\lambda_j) )\BE_j + \beta(\lambda_j)\Bzero\right)\notag\\
&= \sum_{j=1}^\infty(1-\beta(\lambda_j) )\BE_j, \label{eq:pi_zero_result}
\end{align}
we have 
\begin{align}
\label{eq:S_expansion}
S\sum_{j=1}^\infty \Bnu_j\phi_j = \Pi\Bnu-\Pi \Bzero = \sum_{j=1}^\infty\beta(\lambda_j) \Bnu_j\phi_j.
\end{align}

Furthermore, as proved in \cite{appelo2020waveholtz}, the spectral radius $\rho$ of $S$ is strictly smaller than $1$:
\begin{align}
\rho \sim 1-6.33\delta^2,\quad \delta = \inf_j \frac{\lambda_j-\omega}{\omega}.
\end{align}
 As a result, when $\omega$ is not a resonance, 
\begin{align}
\lim_{n\rightarrow\infty} (\Pi \Bnu^n-\BE) = \lim_{n\rightarrow\infty}S^n (\Bnu^0-\BE) \rightarrow 0.
\end{align}
Furthermore, 
\begin{align}
\left( (I-S)\Bnu,\Bnu\right) \geq (1-\rho)||\Bnu||^2>0.
\end{align}
This also verifies that $I-S$ is positive definite. One can easily verify that $I-S$ is self-adjoint based on the expansion \eqref{eq:S_expansion}.

\section{Proof of Theorem \ref{thm:2d_tm}}\label{sec:analysis_discrete}

\begin{proof}[Proof of Theorem \ref{thm:2d_tm}]
Proof  of Theorem \ref{thm:2d_tm} is similar to the  proof of Theorem 2.4 of \cite{appelo2020waveholtz}. Here, we only point out the key steps. We expand all functions as
\begin{align}
\begin{split}
\WE_z^n = \sum_{j=1}^N (\WE_z)^n_j\psi_j,\; J_z = \sum_{j=1}^N (J_z)_j\psi_j,\\
(E_z) = \sum_{j=1}^N (E_z)_j\psi_j,\; \nu^\infty = \sum_{j=1}^N\nu_j^\infty\psi_j.
\end{split}
\end{align}
Then, 
\begin{align}
(E_z)_j = \frac{\frac{1}{\eps}\omega (J_z)_j}{\omega^2-\lambda_j^2},\quad \nu_j^\infty = \frac{\frac{1}{\eps}\omega (J_z)_j}{\womega^2-\lambda_j^2}.
\end{align}
Moreover, for $n\neq0$
\begin{align}
&(\WE_z)_j^{n+1}-2(\WE_z)_j^n+(\WE_z)_j^{n-1}+\Dt^2\lambda_j^2(\WE_z)^n_j\notag\\
=&-\omega\Dt^2\cos(\omega t^n) \frac{1}{\eps}J_z,
\end{align}
and
\begin{align}
(\WE_z)^0_j = \nu_j,\; (\WE_z)^1_j=(1-\frac{1}{2}\lambda_j^2\Dt^2)\nu_j-\frac{\omega}{2}\Dt^2(\frac{1}{\eps}J_z)_j.
\end{align}
Following Appenndix B of \cite{appelo2020waveholtz}, one can verify that 
\begin{align}
\label{eq:discrete_mode}
(\WE_z)^n_j = (\nu_j-\nu_j^\infty)\cos(\wlambda_jt^n)+\nu_j^\infty\cos(\omega t^n),
\end{align}
where
\begin{align}
\frac{\sin(\wlambda_j\Dt/2)}{\Dt/2} = \lambda_j.
\end{align}
Let $\Pi_h \nu = \sum_j \bar{\nu}_j\psi_j$. Then, one can obtain
\begin{align}
\bar{\nu}_j =\nu_j\beta_h(\wlambda_j)+(1-\beta_h(\wlambda_j) ),
\end{align}
where 
\begin{align}
\beta_h(\lambda)=\frac{2\Dt}{T}\sum_{n=0}^M\eta_n\cos(\lambda t^n)\left(\cos(\omega t^n)-\frac{1}{4}\right).
\end{align}
Lemma 2.5 of \cite{appelo2020waveholtz} shows that $|\beta_h(\wlambda_j)|\leq \rho_h:=\max(1-0.3\delta^2,0.63)$. Utilizing the fact that the composite trapezoidal rule is exact for 
pure periodic trigonometric functions of order less than the number of grid points, we complete the proof.
\end{proof}

\section{Verification of time error elimination in  Yee-EM-WaveHoltz}\label{appendix:time_error}
With the modification in \eqref{eq:omega_eliminate_time}, $\womega$  in Theorem \ref{thm:2d_tm} becomes 
$$\frac{\sin(\bar{\omega}\Dt/2)}{\Dt/2}=\omega.$$
Meanwhile, \eqref{eq:discrete_mode} becomes
\begin{align}
(\WE_z)^n_j = (\nu_j-\nu_j^\infty)\cos(\wlambda_jt^n)+\nu_j^\infty\cos(\bar{\omega} t^n),
\end{align}
and the original composite trapezoidal quadrature is no longer exact for $\nu_j^\infty\cos(\bar{\omega} t^n)$. Hence, the modification \eqref{eq:modified_quadrature} to the numerical quadrature is needed to eliminate time error from the numerical integration.


\ifCLASSOPTIONcaptionsoff
  \newpage
\fi



%

\bibliographystyle{IEEEtran}
\bibliography{reference}

\begin{thebibliography}{10}
\providecommand{\url}[1]{#1}
\csname url@samestyle\endcsname
\providecommand{\newblock}{\relax}
\providecommand{\bibinfo}[2]{#2}
\providecommand{\BIBentrySTDinterwordspacing}{\spaceskip=0pt\relax}
\providecommand{\BIBentryALTinterwordstretchfactor}{4}
\providecommand{\BIBentryALTinterwordspacing}{\spaceskip=\fontdimen2\font plus
\BIBentryALTinterwordstretchfactor\fontdimen3\font minus
  \fontdimen4\font\relax}
\providecommand{\BIBforeignlanguage}[2]{{%
\expandafter\ifx\csname l@#1\endcsname\relax
\typeout{** WARNING: IEEEtran.bst: No hyphenation pattern has been}%
\typeout{** loaded for the language `#1'. Using the pattern for}%
\typeout{** the default language instead.}%
\else
\language=\csname l@#1\endcsname
\fi
#2}}
\providecommand{\BIBdecl}{\relax}
\BIBdecl

\bibitem{appelo2020waveholtz}
D.~Appel\"{o}, F.~Garcia, and O.~Runborg, ``{WaveHoltz: iterative solution of
  the Helmholtz equation via the wave equation},'' \emph{SIAM Journal on
  Scientific Computing}, vol.~42, no.~4, pp. A1950--A1983, 2020.

\bibitem{Taflove:2005fk}
A.~Taflove and S.~Hagness, \emph{Computational electrodynamics: the
  {F}inite-{D}ifference {T}ime-{D}omain method}, 3rd~ed.\hskip 1em plus 0.5em
  minus 0.4em\relax Artech House, 2005.

\bibitem{HesthavenWarburton02}
J.~Hesthaven and T.~Warburton, ``Nodal high-order methods on unstructured
  grids: I. time-domain solution of {M}axwell's equations,'' \emph{J. Comput.
  Phys.}, vol. 181, pp. 186--221, 2002.

\bibitem{bristeau1998controllability}
M.-O. Bristeau, R.~Glowinski, and J.~P{\'e}riaux, ``Controllability methods for
  the computation of time-periodic solutions; application to scattering,''
  \emph{Journal of Computational Physics}, vol. 147, no.~2, pp. 265--292, 1998.

\bibitem{grote2019controllability}
M.~J. Grote and J.~H. Tang, ``On controllability methods for the {H}elmholtz
  equation,'' \emph{Journal of Computational and Applied Mathematics}, vol.
  358, pp. 306--326, 2019.

\bibitem{bristeau19993d}
M.-O. Bristeau, R.~Glowinski, J.~P{\'e}riaux, and T.~Rossi, ``3{D} harmonic
  {M}axwell solutions on vector and parallel computers using controllability
  and finite element methods,'' Ph.D. dissertation, INRIA, 1999.

\bibitem{pauly2011theoretical}
D.~Pauly and T.~Rossi, ``Theoretical considerations on the computation of
  generalized time-periodic waves,'' \emph{arXiv preprint arXiv:1105.4095},
  2011.

\bibitem{rabina2014comparison}
J.~R{\"a}bin{\"a}, S.~M{\"o}nk{\"o}l{\"a}, T.~Rossi, A.~Penttil{\"a}, and
  K.~Muinonen, ``Comparison of discrete exterior calculus and discrete-dipole
  approximation for electromagnetic scattering,'' \emph{Journal of Quantitative
  Spectroscopy and Radiative Transfer}, vol. 146, pp. 417--423, 2014.

\bibitem{chew2001fast}
W.~C. Chew, J.-M. Jin, and E.~Michielssen, \emph{Fast and efficient algorithms
  in computational electromagnetics}.\hskip 1em plus 0.5em minus 0.4em\relax
  Artech house, 2001.

\bibitem{andriulli2008multiplicative}
F.~P. Andriulli, K.~Cools, H.~Bagci, F.~Olyslager, A.~Buffa, S.~Christiansen,
  and E.~Michielssen, ``{A multiplicative Calderon preconditioner for the
  electric field integral equation},'' \emph{IEEE Transactions on Antennas and
  Propagation}, vol.~56, no.~8, pp. 2398--2412, 2008.

\bibitem{toselli2006domain}
A.~Toselli and O.~Widlund, \emph{Domain decomposition methods-algorithms and
  theory}.\hskip 1em plus 0.5em minus 0.4em\relax Springer Science \& Business
  Media, 2006, vol.~34.

\bibitem{lee2005non}
S.-C. Lee, M.~N. Vouvakis, and J.-F. Lee, ``A non-overlapping domain
  decomposition method with non-matching grids for modeling large finite
  antenna arrays,'' \emph{Journal of Computational Physics}, vol. 203, no.~1,
  pp. 1--21, 2005.

\bibitem{vouvakis2006fem}
M.~N. Vouvakis, Z.~Cendes, and J.-F. Lee, ``{A FEM domain decomposition method
  for photonic and electromagnetic band gap structures},'' \emph{IEEE
  Transactions on Antennas and Propagation}, vol.~54, no.~2, pp. 721--733,
  2006.

\bibitem{peng2010one}
Z.~Peng, V.~Rawat, and J.-F. Lee, ``One way domain decomposition method with
  second order transmission conditions for solving electromagnetic wave
  problems,'' \emph{Journal of Computational Physics}, vol. 229, no.~4, pp.
  1181--1197, 2010.

\bibitem{peng2010non}
Z.~Peng and J.-F. Lee, ``Non-conformal domain decomposition method with
  second-order transmission conditions for time-harmonic electromagnetics,''
  \emph{Journal of Computational Physics}, vol. 229, no.~16, pp. 5615--5629,
  2010.

\bibitem{dolean2015effective}
V.~Dolean, M.~J. Gander, S.~Lanteri, J.-F. Lee, and Z.~Peng, ``Effective
  transmission conditions for domain decomposition methods applied to the
  time-harmonic curl--curl {M}axwell's equations,'' \emph{Journal of
  computational physics}, vol. 280, pp. 232--247, 2015.

\bibitem{peng2011integral}
Z.~Peng, X.-c. Wang, and J.-F. Lee, ``Integral equation based domain
  decomposition method for solving electromagnetic wave scattering from
  non-penetrable objects,'' \emph{IEEE Transactions on Antennas and
  Propagation}, vol.~59, no.~9, pp. 3328--3338, 2011.

\bibitem{bonazzoli2019domain}
M.~Bonazzoli, V.~Dolean, I.~Graham, E.~Spence, and P.-H. Tournier, ``Domain
  decomposition preconditioning for the high-frequency time-harmonic {M}axwell
  equations with absorption,'' \emph{Mathematics of Computation}, vol.~88, no.
  320, pp. 2559--2604, 2019.

\bibitem{hiptmair1998multigrid}
R.~Hiptmair, ``Multigrid method for {M}axwell's equations,'' \emph{SIAM Journal
  on Numerical Analysis}, vol.~36, no.~1, pp. 204--225, 1998.

\bibitem{gopalakrishnan2004analysis}
J.~Gopalakrishnan, J.~E. Pasciak, and L.~F. Demkowicz, ``Analysis of a
  multigrid algorithm for time harmonic {M}axwell equations,'' \emph{SIAM
  Journal on Numerical Analysis}, vol.~42, no.~1, pp. 90--108, 2004.

\bibitem{lu2016robust}
P.~Lu and X.~Xu, ``A robust multilevel method for the time-harmonic {M}axwell
  equation with high wave number,'' \emph{SIAM Journal on Scientific
  Computing}, vol.~38, no.~2, pp. A856--A874, 2016.

\bibitem{tsuji2012sweepingyee}
P.~Tsuji and L.~Ying, ``A sweeping preconditioner for {Y}ee’s finite
  difference approximation of time-harmonic {M}axwell’s equations,''
  \emph{Frontiers of Mathematics in China}, vol.~7, no.~2, pp. 347--363, 2012.

\bibitem{tsuji2012sweepingfem}
P.~Tsuji, B.~Engquist, and L.~Ying, ``A sweeping preconditioner for
  time-harmonic {M}axwell’s equations with finite elements,'' \emph{Journal
  of Computational Physics}, vol. 231, no.~9, pp. 3770--3783, 2012.

\bibitem{morawetz1962limiting}
C.~Morawetz, ``The limiting amplitude principle,'' \emph{Communications on Pure
  and Applied Mathematics}, vol.~15, no.~3, pp. 349--361, 1962.

\bibitem{yee1966numerical}
K.~Yee, ``Numerical solution of initial boundary value problems involving
  {M}axwell's equations in isotropic media,'' \emph{IEEE Transactions on
  antennas and propagation}, vol.~14, no.~3, pp. 302--307, 1966.

\bibitem{taflove2005computational}
A.~Taflove and S.~C. Hagness, \emph{Computational electrodynamics: the
  finite-difference time-domain method}.\hskip 1em plus 0.5em minus 0.4em\relax
  Artech house, 2005.

\bibitem{hesthaven2007nodal}
J.~S. Hesthaven and T.~Warburton, \emph{Nodal discontinuous {G}alerkin methods:
  algorithms, analysis, and applications}.\hskip 1em plus 0.5em minus
  0.4em\relax Springer Science \& Business Media, 2007.

\bibitem{cockburn2012discontinuous}
B.~Cockburn, G.~E. Karniadakis, and C.-W. Shu, \emph{Discontinuous {G}alerkin
  methods: theory, computation and applications}.\hskip 1em plus 0.5em minus
  0.4em\relax Springer Science \& Business Media, 2012, vol.~11.

\bibitem{banks2020high}
J.~W. Banks, B.~B. Buckner, W.~D. Henshaw, M.~J. Jenkinson, A.~V. Kildishev,
  G.~Kova{\v{c}}i{\v{c}}, L.~J. Prokopeva, and D.~W. Schwendeman, ``A
  high-order accurate scheme for {M}axwell's equations with a generalized
  dispersive material {(GDM)} model and material interfaces,'' \emph{Journal of
  Computational Physics}, vol. 412, p. 109424, 2020.

\bibitem{garcia2021numerical}
F.~Garcia, \emph{{Numerical methods for wave phenomena}}.\hskip 1em plus 0.5em
  minus 0.4em\relax University of Colorado Boulder, 2021.

\bibitem{oskooi2010meep}
A.~F. Oskooi, D.~Roundy, M.~Ibanescu, P.~Bermel, J.~D. Joannopoulos, and S.~G.
  Johnson, ``Meep: A flexible free-software package for electromagnetic
  simulations by the fdtd method,'' \emph{Computer Physics Communications},
  vol. 181, no.~3, pp. 687--702, 2010.

\bibitem{rbc_pack_url}
J.~Lagrone, T.~Hagstrom, and R.~Chen, ``rbcpack,'' http://www.rbcpack.org.

\bibitem{sleijpen1993bicgstab}
G.~L. Sleijpen and D.~R. Fokkema, ``{BiCGstab(l) for linear equations involving
  unsymmetric matrices with complex spectrum},'' \emph{Electronic Transactions
  on Numerical Analysis.}, vol.~1, pp. 11--32, 1993.

\bibitem{meep_url}
``{Meep},'' \url{https://meep.readthedocs.io/}.

\bibitem{davis2011university}
T.~A. Davis and Y.~Hu, ``{The University of Florida sparse matrix
  collection},'' \emph{ACM Transactions on Mathematical Software (TOMS)},
  vol.~38, no.~1, pp. 1--25, 2011.

\bibitem{petropoulos2000reflectionless}
P.~G. Petropoulos, ``Reflectionless sponge layers as absorbing boundary
  conditions for the numerical solution of maxwell equations in rectangular,
  cylindrical, and spherical coordinates,'' \emph{SIAM Journal on Applied
  Mathematics}, vol.~60, no.~3, pp. 1037--1058, 2000.

\bibitem{lagrone2016double}
J.~LaGrone and T.~Hagstrom, ``Double absorbing boundaries for finite-difference
  time-domain electromagnetics,'' \emph{Journal of Computational Physics}, vol.
  326, pp. 650--665, 2016.

\bibitem{assous2018mathematical}
F.~Assous, P.~Ciarlet, and S.~Labrunie, \emph{Mathematical Foundations of
  Computational Electromagnetism}, ser. Applied Mathematical Sciences.\hskip
  1em plus 0.5em minus 0.4em\relax Springer International Publishing, 2018.

\end{thebibliography}

\begin{IEEEbiographynophoto}{Daniel Appel\"{o}}
Bio:
Daniel Appelö holds a Ms degree in Electrical Engineering and a Ph. D. degree in Numerical Analysis from the Royal Institute of Technology in Sweden and is currently an Associate Professor in the Department of Computational Mathematics, Science and Engineering and the Department of Mathematics at Michigan State University. 
\end{IEEEbiographynophoto}

\begin{IEEEbiographynophoto}{Zhichao Peng}
Bio:
Zhichao Peng holds  a Ph. D. degree in Mathematics from the Rensselaer Polytechnic Institute in Troy, NY, USA in 2020. He is currently a research associate  in the Department of Mathematics at Michigan State University. 
\end{IEEEbiographynophoto}

\end{document}